\lstdefinelanguage{Julia}%
  {morekeywords={abstract,break,case,catch,const,continue,do,else,elseif,%
      end,export,false,for,function,immutable,import,importall,if,julia,in,%
      macro,module,otherwise,quote,return,switch,true,try,type,typealias,%
      using,while},%
   sensitive=true,%
   alsoother={$},%
   morecomment=[l]\#,%
   morecomment=[n]{\#=}{=\#},%
   morestring=[s]{"}{"},%
   morestring=[m]{'}{'},%
}[keywords,comments,strings]%
\bfseries\color{blue},
\newtheorem{theorem}{Theorem}[section]
\newtheorem{lemma}[theorem]{Lemma}
\newtheorem{proposition}{Proposition}[section]
\theoremstyle{definition}
\newtheorem{definition}[theorem]{Definition}
\newtheorem{notation}[theorem]{Notation}
\newtheorem{example}[theorem]{Example}
\theoremstyle{remark}
\newtheorem{remark}[theorem]{Remark}
\numberwithin{equation}{section}
\DeclareMathOperator{\D}{d}
\DeclareMathOperator{\E}{e}
\DeclareMathOperator{\tr}{tr}
\DeclareMathOperator{\I}{i}
\begin{document}

\title{On spectral and numerical properties of random butterfly matrices}


\author{Thomas Trogdon}
\address{Department of Mathematics, University of California, Irvine}
\email{ttrogdon@math.uci.edu}

\keywords{Butterfly matrices, fast matrix multiplication, random matrices}

\thanks{This work was supported in part by grant NSF DMS-1753185.}

\subjclass[2010]{Primary: 65T50,  Secondary: 15B52}

\date{}

\dedicatory{}

\begin{abstract}
Spectral and numerical properties of classes of random orthogonal butterfly matrices, as introduced by Parker (1995), are discussed, including the uniformity of eigenvalue distributions. These matrices are important because the matrix-vector product with an $N$-dimensional vector can be performed in $O(N \log N)$ operations. And in the simplest situation, these random matrices coincide with Haar measure on a subgroup of the orthogonal group.  We discuss other implications in the context of randomized linear algebra. 
\end{abstract}

\maketitle


\bibliographystyle{amsplain}


\section{Introduction}

In this paper we discuss properties of several classes of random butterfly matrices.  Loosely speaking, butterfly matrices are matrices in $\mathbb R^{N\times N}$, $N = 2^n$, which are defined recursively.  The most commonly encountered example (in $\mathbb C^{N\times N}$) is the matrix representation for the discrete (or fast) Fourier transform \cite{VanLoan1992}.  Other examples include Hadamard matrices.  A definition is given in Section~\ref{s:butterfly}.  The primary utility of such matrices is that they can be be applied (matrix times a vector) in much less than $N^2$ operations --- typically $2 N n$ operations.  For two classes of random butterfly matrices we prove that the eigenvalues are distributed uniformly on the unit circle in $\mathbb C$.

Butterfly matrices, and in particular, random butterfly matrices, were introduced by Stott Parker \cite{Parker1995} as a mechanism to avoid pivoting in Gaussian elimination.  One randomizes the a linear system $Ax = b$ by applying a random orthogonal (or unitary) transformation to the columns, giving $\Omega Ax = \Omega b$.  After randomization, one can show that if $A$ is nonsingular then the upper left $k \times k$, $k = 1,2,\ldots$  blocks of $\Omega A$ are also nonsingular.  This implies that Gaussian elimination needs no pivoting to complete, i.e. $\Omega A$ has an $LU$-factorization.

\begin{figure}[htbp]
  \centering
  \begin{overpic}[width=.3\linewidth]{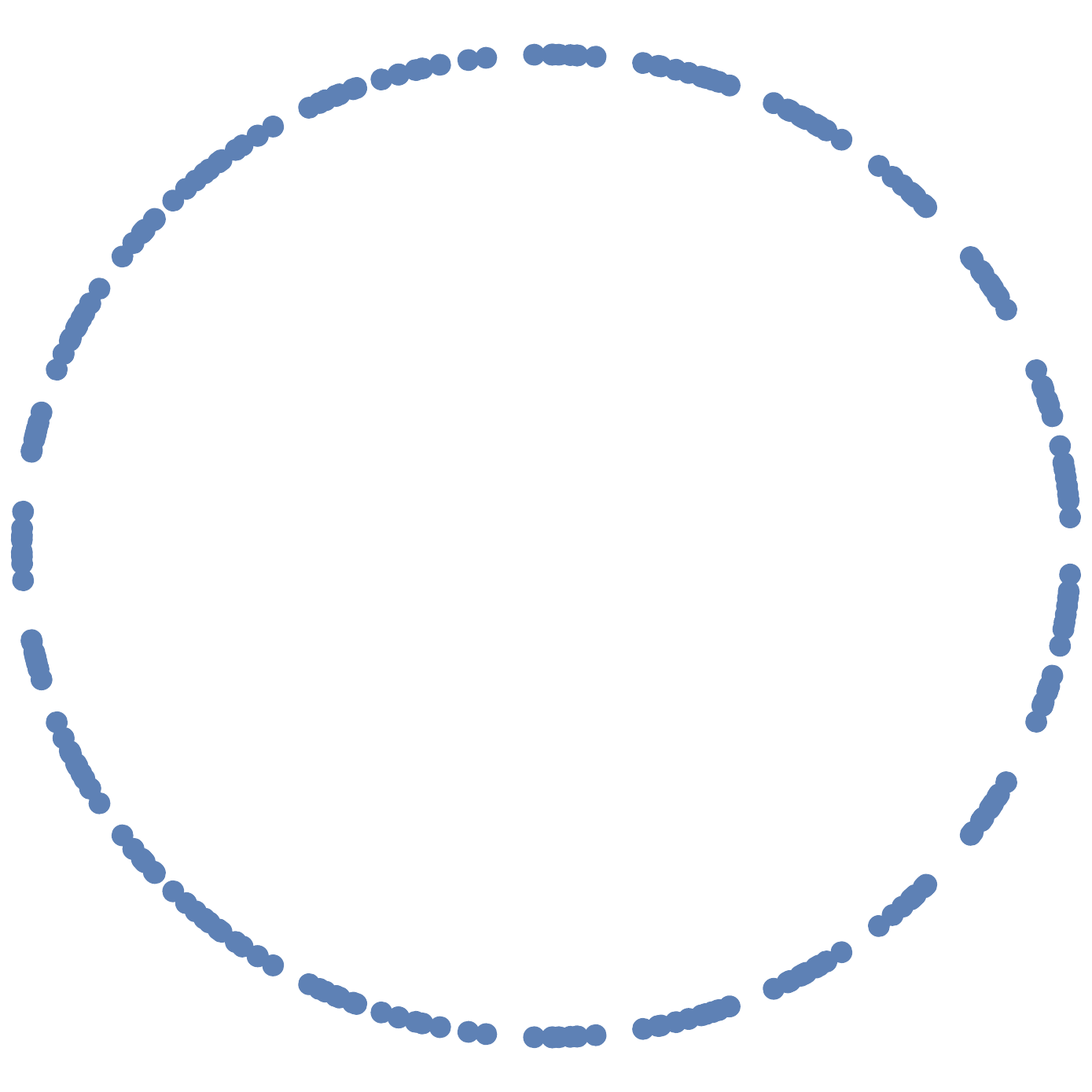}
    \put(5,5){(a)}
  \end{overpic}
  \begin{overpic}[width=.3\linewidth]{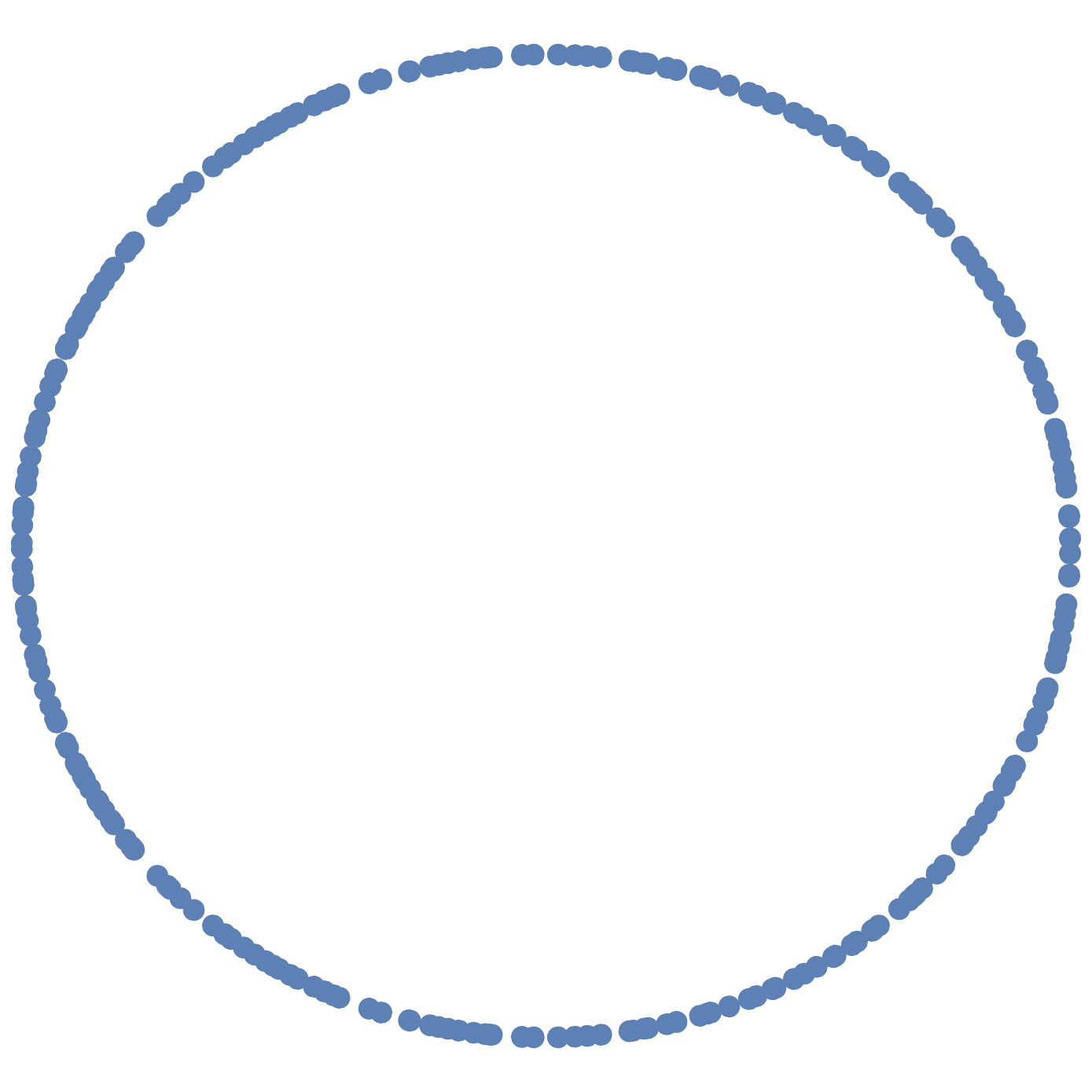}
    \put(5,5){(b)}
  \end{overpic}\\
  \begin{overpic}[width=.3\linewidth]{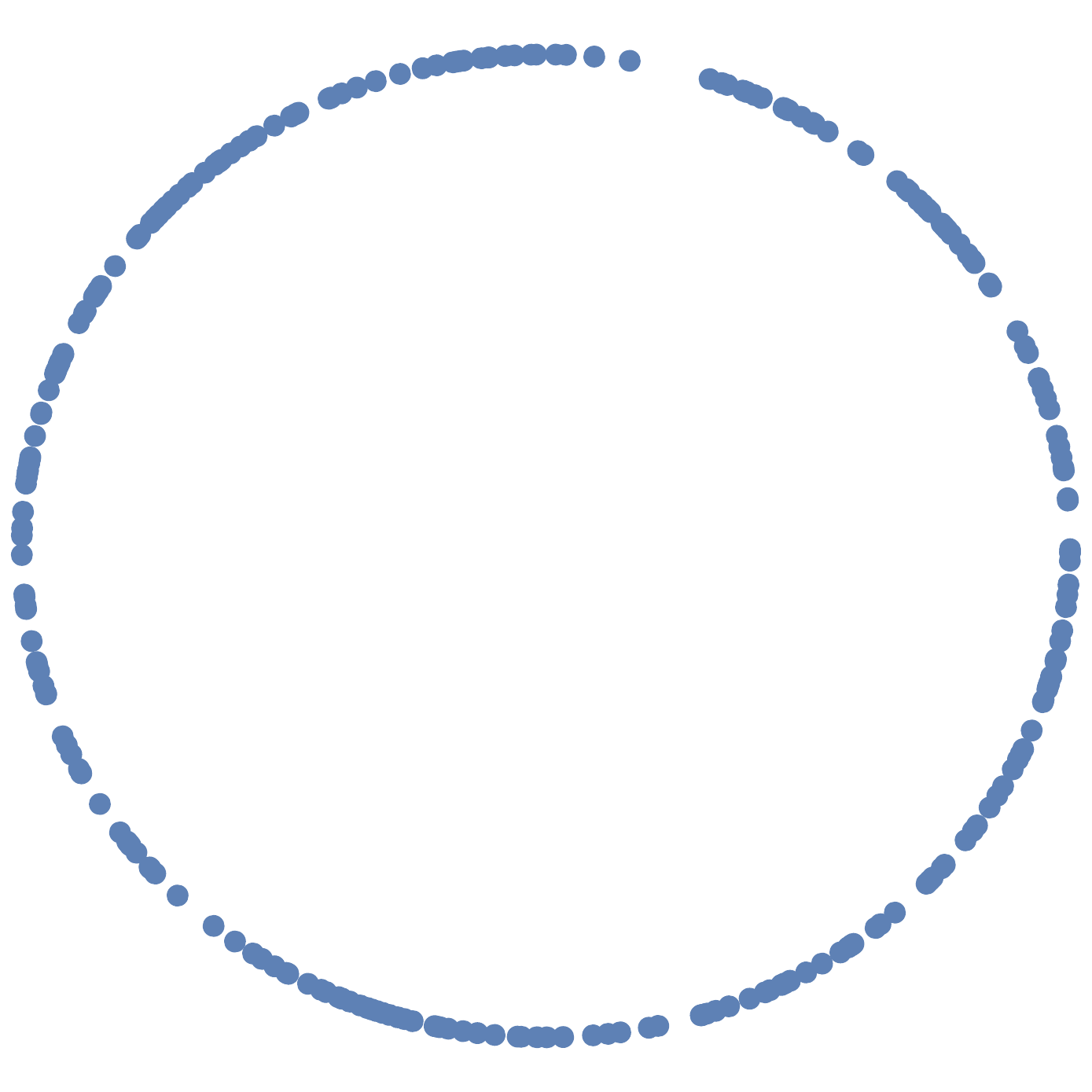}
    \put(5,5){(c)}
  \end{overpic}
  \begin{overpic}[width=.3\linewidth]{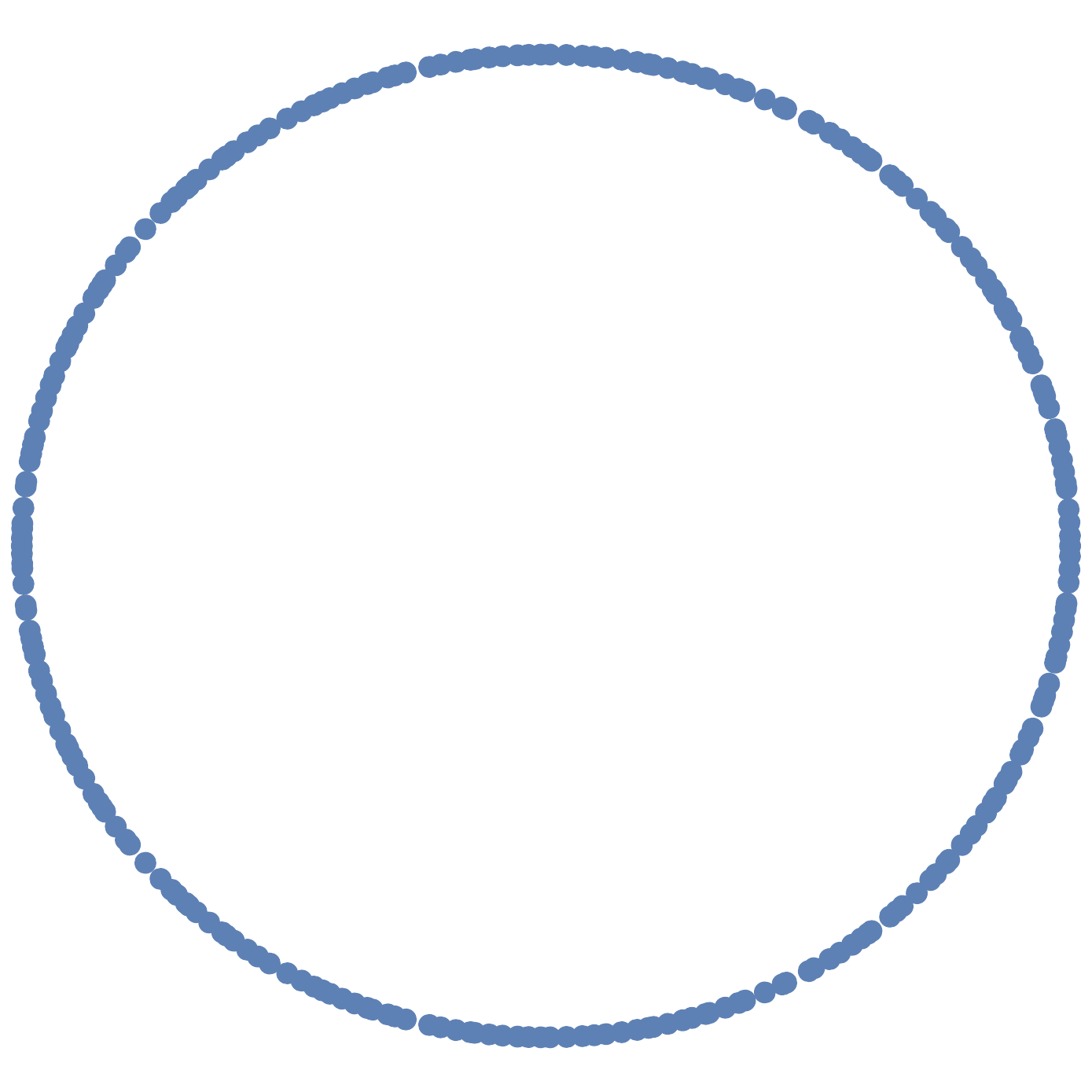}
    \put(5,5){(d)}
  \end{overpic}
  \caption{\label{f:typical} The eigenvalues of typical random orthogonal matrices of size $256 \times 256$ drawn from different distributions.  (a) The eigenvalues of Haar-butterfly matrices from Section~\ref{s:haarbutt}.  These eigenvalues tend to appear in groups.  (b) The eigenvalues of non-simple random butterfly matrices from Section~\ref{s:general}.  Here groups are less apparent.  (c) Independent and identically distributed points on $\{|z| = 1\}$.  Poisson clumping is evident.  (d) The eigenvalues of a typical Haar matrix on $\mathrm{O}(256)$.}
\end{figure}

Similar randomization has been employed in the context of least squares problems and low-rank approximations, see \cite{Avron2010,Halko2011,Liberty2007,Lerman2014} and the references therein.  The main idea for low-rank approximation is that for $A \in \mathbb C^{N \times M}$, $N > M$ one can often randomize $A$ to better approximate its range. For a rank $K$ approximation, one considers $A\Omega$ for random matrix $\Omega$ of dimension $M \times L$, $L = K + p$ for some oversampling\footnote{In practice, $p = 5,10$ work well \cite{Halko2011}.} $p > 0$.  Then a $QR$ factorization $A\Omega = QR$ can be found and $Q$ \underline{should} be a good rank $K$ approximation of the range of $A$.  The random matrix $\Omega$ is often taken to be a so-called \emph{subsampled randomized Fourier transform} (SRFT) matrix, which can be seen as a subclass of butterfly matrices (over $\mathbb C$, but we consider real matrices in this work).

We emphasize that random butterfly matrices (and SRFT matrices) are used for the sole reason that matrix-vector multiplication is fast.  To truly uniformize the matrix $A$ one might want to sample a matrix $Q$, at uniform, from Haar measure on the orthogonal (or unitary) group in $\mathbb R^{M \times M}$ (or $\mathbb C^{M \times M}$) and then subsample $L$ its columns\footnote{Uniformity tells us that it suffices to take the first $L$ columns.} to form $\Omega$. Such a matrix $Q$ is referred to as a Haar matrix. Then $A \Omega$ should be, in some sense, in it is most generic state.  But computing the first $L$ columns of an orthogonal matrix sampled uniformly at random requires $O(M^2 L)$ operations (see \cite{Stewart1980} and Section~\ref{s:haar} below), and then multiplying $A\Omega$ requires $O(NML)$ operations.  But, as is well-known, the recursive structure of butterfly matrices $B$ allow one to subsample and reduce the computation of $AB$ to purely $O(NM \log_2 L)$ operations \cite{Halko2011}.  See also \cite{Boutsidis2013,Tropp2011} and Section~\ref{s:code_haarrbm} for discussions of subsampling.    

In the context of solving a least squares system $\min_x \|Ax - b\|_2$, $A \in \mathbb C^{N \times M}$ and $N \gg M$ one may want to subsample the rows of $A$ to consider a reduced system with fewer equations (but the same number of unknowns).  The choice of which rows to discard \emph{a priori} is difficult.  If $A$ has repeated rows and only these rows are sampled, by chance, the subsampled matrix will fail to be of full rank.  One can randomize $A$ first by applying a random butterfly matrix to each of its columns and then subsample the rows.  This can perform well \cite{Avron2010}.

The main goal of this paper is to shed light on the interesting mathematical and numerical properties of the infrequently-discussed random butterfly matrices.  This is accomplished by (1) comparing the (statistical) spectral properties of random butterfly matrices to that of Haar matrices and matrices with independent and identically distributed eigenvalues and (2) comparing the performance of these matrices in the above linear algebra contexts to both Haar matrices and SRFT matrices.  In the remainder of this section we define (random) butterfly matrices and Haar matrices and introduce preliminaries from random matrix theory.  In Section~\ref{s:haarbutt} we establish properties of the simplest class of random butterfly matrices which are identified with Haar matrices on a subgroup of the special orthogonal group. In Section~\ref{s:general} we give an additional generalization.  Finally, in Section~\ref{s:numerics} we perform numerical experiments with random butterfly matrices to test their uniformization properties.  A visual comparison of the spectrum of a typical random butterfly matrix is given in Figure~\ref{f:typical}.

\begin{remark}
In this paper we focus on real random orthogonal matrices of size $N \times N$, $N = 2^n$.  Important generalizations to consider are extensions to complex unitary matrices and $N = 1,2,3,\ldots$.
\end{remark}

\subsection{Butterfly matrices}\label{s:butterfly}

Unlike the work of Parker \cite{Parker1995} we concentrate on orthogonal butterfly matrices.  We also use the term \emph{butterfly matrix} to refer the recursively-defined matrix itself.
\newcommand{\pp}[1]{^{(#1)}}
\begin{definition}
A recursive orthogonal butterfly matrix $B\pp{N} \in \mathrm{O}(N)$ (or just butterfly matrix) of size $N = 2^n$ is given by the $1\times 1$ matrix $\begin{bmatrix} 1 \end{bmatrix}$ when $n = 0$ and
\begin{align}\label{e:buttdef}
B \pp{N} = \begin{bmatrix} \phantom{.}~~C_{n-1} A_1\pp{N/2}  & S_{n-1} A_2\pp{N/2}  \\ - S_{n-1} A_1 \pp{N/2}  & C_{n-1} A_2\pp{N/2}  \end{bmatrix}, \quad n > 0.
\end{align} 
Here $A_1\pp{N/2}$ and $A_2\pp{N/2}$ are both butterfly matrices of size $N/2 \times N/2 = 2^{n-1} \times 2^{n-1}$ and $C_{n-1}$ and $S_{n-1}$ are symmetric $2^{n-1}\times 2^{n-1}$ matrices satisfying $C_{n-1}^2 + S_{n-1}^2 = I$ and $C_{n-1}S_{n-1} = S_{n-1}C_{n-1}$.
\end{definition}

\begin{example}
  If $C_{n-1}= c_{n-1}I$, $S_{n-1} = s_{n-1}I$ and $A_1\pp{N/2} = A_2\pp{N/2}$ then
  \begin{align}
    B\pp{N} = \begin{bmatrix} c_{n-1} & s_{n-1} \\ - s_{n-1} & c_{n-1} \end{bmatrix} \otimes A_1 \pp{N/2},
  \end{align}
  where $\otimes$ denotes the \emph{Kronecker} product.  We refrain from using Kronecker products because this notation is not convenient for the generality in Definition~\ref{e:buttdef}.
\end{example}  

\begin{example}
  A unitary example of a butterfly-like matrix is the matrix for the discrete Fourier transform, see \cite[Section 1.4]{VanLoan1992}.  Let $\omega = \E^{- \I \pi/2}$ be a 4th-root of unity.  Define
  \begin{align}
    F_4 = \begin{bmatrix} 1 & 1 & 1 & 1\\
      1 & \omega & \omega^2 & \omega^3\\
      1 & \omega^2 & \omega^4 & \omega^6\\
      1 & \omega^3 & \omega^6 & \omega^9 \end{bmatrix}, \quad F_2 = \begin{bmatrix} 1 & 1 \\  1 & \omega^2 \end{bmatrix}.
  \end{align}
  Note that $\omega^2$ is a square-root of unity and
  \begin{align}
    F_4 \begin{bmatrix} 1 & \\ && 1 \\ &1 \\ &&& 1 \end{bmatrix} = \begin{bmatrix} 1 & 1 & 1 & 1\\
      1 & \omega^2 & \omega & \omega^3\\
      1 & \omega^4 & \omega^2 & \omega^6\\
      1 & \omega^6 & \omega^3 & \omega^9 \end{bmatrix} = \begin{bmatrix} F_2 & \begin{bmatrix} 1 \\ & \omega \end{bmatrix} F_2  \\
    F_2  &  \begin{bmatrix} \omega^2 &\\ & \omega^3 \end{bmatrix} F_2\end{bmatrix}
  \end{align}
  Then the relation to Kronecker products becomes clear.
\end{example}

\noindent We list the algebraic properties of butterfly matrices:
\begin{itemize}
\item By induction, it follows that $B\pp{N}$ is orthogonal.
\item For all $N$, $\det B\pp{N} = 1$.
\item If $C_j$ and $S_j$   are diagonal for all $j$, then performing $B\pp{N} x$ requires $O(N \log N)$ operations for $x \in \mathbb R^N$, see \eqref{e:iter-haar}.
\item If $C_j$ and $S_j$ are diagonal for all $j$, then computing $M$ appropriately consecutive rows of $B\pp{N} x $ requires $O(N \log_2 M)$ operations, see \eqref{e:iter}.
\end{itemize}

\begin{definition}
Let $\Sigma = ( (C_j,S_j) )_{j\geq 0}$ be a sequence of pairs of random matrices\footnote{A random matrix is a matrix whose entries are given by random variables.} satisfying
\begin{itemize}
\item $C_{j}$ and $S_{j}$ are symmetric $2^{j}\times 2^{j}$ matrices,
\item $C_j^2 + S_{j}^2 = I$, $C_jS_j = S_jC_j$, and
\item $(C_j,S_j)_{j \geq 0}$ is an independent sequence.  
\end{itemize}
Then a random butterfly matrix $B\pp{N}(\Sigma)$ is given by \eqref{e:buttdef} where $A_1\pp {N/2}$ and $A_2 \pp {N/2}$ are independent and identically distributed (iid) copies of $B\pp{N/2}(\Sigma)$. \end{definition}

\begin{notation}
If $X$ has the same distribution as $Y$ we write $X \sim Y$.
\end{notation}

\begin{definition}
Given a sequence $\Sigma$ as in the previous definition, a simple random butterfly matrix $B_\mathrm{s}\pp{N}(\Sigma)$ is given by \eqref{e:buttdef} where $A_1\pp {N/2} = A_2 \pp {N/2}$ are distributed as $B_\mathrm{s}\pp{N/2}(\Sigma)$.
\end{definition}

In Sections~\ref{s:haarbutt} and \ref{s:general} we make specific choices for the sequence $\Sigma$.

\subsection{Haar measure}

Haar measure is a natural measure on locally compact Hausdorff topological groups.  The proof is originally due to Weil \cite{Weil1951} and can be found translated in \cite{Nachbin1976}.
\begin{theorem}[\cite{Weil1951}]
Let $G$ be a locally compact Hausdorff topological group.  Then, up to a unique multiplicative constant, there exists a unique non-trivial Borel measure $\mu$ such that
\begin{itemize}
\item $\mu(gS)= \mu(S)$ for all $g \in G$ and $S$ a Borel set,
\item $\mu$ is countably additive,
\item $\mu(K) < \infty$ for $K$ compact,
\item $\mu$ is inner regular on open sets and outer regular on Borel sets.
\end{itemize}
\end{theorem}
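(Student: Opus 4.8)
This final statement is the classical existence-and-uniqueness theorem for left Haar measure, so the proof I would give is Weil's original construction via the Haar covering number, packaged through the Riesz representation theorem. Throughout, let $C_c(G)$ denote the real continuous functions of compact support on $G$, let $C_c^+(G)$ denote the nonzero nonnegative ones, and for $y\in G$ let $L_yf$ be the left translate $x\mapsto f(y^{-1}x)$. The plan has three parts: (i) build a nonzero positive left-invariant linear functional $I$ on $C_c(G)$; (ii) apply the Riesz representation theorem to realize $I$ as integration against a Borel measure $\mu$, from which the four listed properties follow; (iii) derive uniqueness up to a positive scalar via a Fubini argument together with the uniqueness clause of Riesz representation.

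For (i) I would use the Haar covering number. Given $f,\varphi\in C_c^+(G)$, compactness of $\operatorname{supp}f$ and local compactness of $G$ guarantee that $f$ is dominated by some finite combination $\sum_i c_i\,L_{y_i}\varphi$, so
\[
 (f:\varphi)\ :=\ \inf\Bigl\{\ \textstyle\sum_{i=1}^{m}c_i\ :\ f\le\sum_{i=1}^{m}c_i\,L_{y_i}\varphi,\ c_i>0,\ y_i\in G\ \Bigr\}\ \in(0,\infty).
\]
I would verify the routine properties of $(\,\cdot:\cdot\,)$: left-invariance $(L_yf:\varphi)=(f:\varphi)$, monotonicity, positive homogeneity, subadditivity, submultiplicativity $(f:\varphi)\le(f:\psi)(\psi:\varphi)$, and — fixing once and for all a reference $f_0\in C_c^+(G)$ — the two-sided bound $(f_0:f)^{-1}\le(f:\varphi)/(f_0:\varphi)\le(f:f_0)$. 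Set $I_\varphi(f)=(f:\varphi)/(f_0:\varphi)$. These functionals are left-invariant, monotone, positively homogeneous, subadditive, uniformly bounded on each fixed $f$, and satisfy the one substantive lemma: for every $f_1,f_2\in C_c^+(G)$ and $\varepsilon>0$ there is a neighborhood $V\ni e$ with $\operatorname{supp}\varphi\subseteq V\Rightarrow I_\varphi(f_1)+I_\varphi(f_2)\le I_\varphi(f_1+f_2)+\varepsilon$ (proved by dominating $f_1+f_2$ by a slightly larger $g\in C_c^+(G)$ and using uniform continuity of $f_1/g$ and $f_2/g$ to split a near-optimal cover of $g$). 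Viewing each $I_\varphi$ as a point of the compact product space $\prod_{f\in C_c^+(G)}[\,(f_0:f)^{-1},(f:f_0)\,]$ and letting $\varphi$ range over a net whose supports shrink to $\{e\}$, Tychonoff's theorem produces a cluster point $I$; it inherits left-invariance, monotonicity and homogeneity directly, the approximate-additivity lemma forces it to be genuinely additive, and $I(f_0)=1$ makes it nonzero, so $I$ extends to a nonzero positive left-invariant linear functional on $C_c(G)$.

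Part (ii) is then automatic: the Riesz representation theorem gives a unique Radon measure $\mu$ with $\int f\,d\mu=I(f)$ for all $f\in C_c(G)$, and a Radon measure on a locally compact Hausdorff space is by construction a countably additive Borel measure that is finite on compacta, outer regular on Borel sets and inner regular on open sets, which are the last three bullets. Left-invariance passes from $I$ to $\mu$: one gets $\mu(gK)=\mu(K)$ for compact $K$ by squeezing $K$ between functions of $C_c(G)$ and translating, and then outer regularity promotes this to $\mu(gS)=\mu(S)$ for all Borel $S$. For (iii), suppose $\mu$ and $\nu$ both satisfy the four conditions and fix $g\in C_c^+(G)$ with $\int g\,d\nu\ne0$; for arbitrary $f\in C_c(G)$, writing the products $\bigl(\int f\,d\mu\bigr)\bigl(\int g\,d\nu\bigr)$ and $\bigl(\int g\,d\mu\bigr)\bigl(\int f\,d\nu\bigr)$ as integrals over $G\times G$, applying Fubini (legitimate since the integrands lie in $C_c(G\times G)$) and repeatedly using the left-invariance of both measures shows the two numbers coincide, whence $\int f\,d\mu=c\int f\,d\nu$ with $c=\int g\,d\mu/\int g\,d\nu$ independent of $f$; the uniqueness clause of the Riesz representation theorem then gives $\mu=c\,\nu$ as measures.

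The main obstacle is the core of part (i): manufacturing an honestly additive limit functional out of the only approximately additive family $\{I_\varphi\}$. Both ingredients are delicate — the approximate-additivity lemma, where uniform continuity of functions in $C_c(G)$ and joint continuity of the group operation are used to split a near-optimal cover, and the compactness argument, where one must confirm that the cluster point genuinely inherits additivity in the limit (and not merely the properties that pass to pointwise limits for trivial reasons). By contrast, the Riesz-representation bookkeeping in (ii) and the Fubini manipulation in (iii) are routine, the only care being the dominated-convergence justification of the interchanges, which again comes from continuity of multiplication on $G$.
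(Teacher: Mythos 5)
The paper gives no proof of this theorem; it is stated as a classical result with the proof attributed to Weil \cite{Weil1951} and its translation in \cite{Nachbin1976}. Your outline --- the covering number $(f:\varphi)$, the approximate-additivity lemma, extraction of a limit functional by compactness, Riesz representation, and the Fubini uniqueness argument --- is precisely that classical construction, correctly organized and with the genuinely delicate step (honest additivity of the cluster-point functional) correctly identified.
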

Let $\mathrm{O}(N)$ denote the group of orthogonal matrices in $\mathbb R^{N \times N}$, let $\mathrm{U}(N)$ denote the group of unitary matrices in $\mathbb C^{N\times N}$ and let $\mathrm{SO}(N) \subset \mathrm{O}(N)$ denote the subgroup of matrices whose determinant is unity.  Since these groups are compact, the associated Haar measure is normalized to be a probability measure.  A matrix sampled from Haar measure on $\mathrm{O}(N)$ is referred to here as a Haar matrix.  It is well-known that one can generate a Haar matrix by applying (modified) Gram-Schmidt to a matrix of iid standard normal random variables \cite{Forrester2010,Mezzadri2006,Stewart1980}.

\subsection{Eigenvalue distributions and tools from random matrix theory}

Given $Q \in \mathrm O (N)$ with eigenvalues $\sigma(Q):= \{\lambda_1,\lambda_2,\ldots, \lambda_N\} \subset \mathbb U : = \{z \in \mathbb C : |z| = 1\}$, define the empirical spectral measure
\begin{align}
\mu_Q = \frac{1}{N} \sum_{j} \delta_{\lambda_j}.
\end{align}
When $Q$ is a random element of $\mathrm O(N)$ we obtain a probability measure on probability measures on $\mathbb U$.  For such a $Q$, define the measure $\mathbb E \mu_Q$ by
\begin{align}
\int_{\mathbb U} f \D \mathbb E \mu_Q : = \mathbb E \left[ \int_{\mathbb U} f \D \mu_Q \right],  \quad f \in C(\mathbb U).
\end{align}
Here $\mathbb E$ refers to the expectation with respect to the distribution of $Q$. The measure $\mathbb E \mu_Q$ is referred to as the density of states.

\begin{definition}
A random orthogonal (or unitary) matrix $Q \in \mathrm{O}(N)$ (or $\mathrm{U}(N)$) has a uniform eigenvalue distribution if $\mathbb E \mu_Q$ is the uniform measure on $\mathbb U$, i.e. the density of states is uniform.
\end{definition}

\begin{theorem}
A random matrix $Q \in \mathrm O(N)$ has a uniform eigenvalue distribution if and only if for $k = 0,1,2,\ldots$
\begin{align}
\mathbb E \left[ \frac{1}{N} \tr Q^k \right] = \begin{cases} 1 & k = 0,\\
0 & k > 0.\end{cases}
\end{align}
\end{theorem}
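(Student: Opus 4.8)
The plan is to translate both sides of the equivalence into statements about the complex moments $\int_{\mathbb U} z^k \,\D\mu$ of the measure $\mu = \mathbb E\mu_Q$, and then invoke the fact that a finite Borel measure on the compact set $\mathbb U$ is uniquely determined by these moments. First I would record the elementary identity that, for every integer $k \geq 0$,
\begin{align*}
\int_{\mathbb U} z^k \,\D \mathbb E\mu_Q
= \mathbb E\left[ \int_{\mathbb U} z^k \,\D \mu_Q \right]
= \mathbb E\left[ \frac 1N \sum_j \lambda_j^k \right]
= \mathbb E\left[ \frac 1N \tr Q^k \right],
\end{align*}
which follows from the definition of $\mathbb E\mu_Q$ together with the fact that the eigenvalues of $Q^k$ are $\lambda_1^k,\ldots,\lambda_N^k$; the interchange of $\mathbb E$ with the integral is justified by Fubini/Tonelli since $z\mapsto z^k$ is bounded on $\mathbb U$. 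The same identity with $z^k$ replaced by $\bar z^k$ gives $\int_{\mathbb U} \bar z^k \,\D\mathbb E\mu_Q = \mathbb E\big[\tfrac 1N \tr Q^{-k}\big]$.

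Next I would note that the uniform (normalized arclength) measure $\nu$ on $\mathbb U$ satisfies $\int_{\mathbb U} z^k \,\D\nu = 1$ for $k = 0$ and $0$ otherwise. With this the forward implication is immediate: if $\mathbb E\mu_Q = \nu$ then the displayed computation shows $\mathbb E\big[\tfrac1N\tr Q^k\big]$ takes exactly the asserted values.

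For the reverse implication, suppose $\mathbb E\big[\tfrac 1N\tr Q^k\big] = 1$ for $k=0$ and $0$ for $k>0$. Because $Q$ is real and orthogonal, $Q^{-1} = Q^\top$, so $\tr Q^{-k} = \tr (Q^k)^\top = \tr Q^k$ (equivalently, the eigenvalues lie on $\mathbb U$ and occur in conjugate pairs, so all trace moments are real); hence $\int_{\mathbb U} z^k\,\D\mathbb E\mu_Q = \int_{\mathbb U} z^k\,\D\nu$ for \emph{every} $k \in \mathbb Z$. By the Stone–Weierstrass theorem the trigonometric polynomials $\mathrm{span}\{z^k : k\in\mathbb Z\}$ are dense in $C(\mathbb U)$, so $\int f\,\D\mathbb E\mu_Q = \int f\,\D\nu$ for all $f\in C(\mathbb U)$; since both are finite Borel measures on the compact metric space $\mathbb U$, the Riesz representation theorem forces $\mathbb E\mu_Q = \nu$, i.e.\ $Q$ has a uniform eigenvalue distribution.

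The only genuinely delicate point is the passage from nonnegative moments to all integer moments, and this is exactly where orthogonality (real symmetry of the spectrum, or $Q^{-1}=Q^\top$) enters; the remainder is the standard moment-determinacy argument on the circle. I would therefore expect the write-up to be short, with the bulk of the care going into stating precisely why nonnegative trace moments suffice.
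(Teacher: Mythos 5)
Your argument is correct and follows essentially the same route as the paper: both reduce the statement to the moments of $\mathbb E\mu_Q$, handle the negative moments via $\tr Q^{-k} = \tr (Q^k)^T = \tr Q^k$, and conclude by moment-determinacy of measures on $\mathbb U$. Your write-up simply makes explicit (via Stone--Weierstrass and Riesz representation) the determinacy step that the paper states without proof.
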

\begin{proof}
It follows by definition that
\begin{align}
\int_{\mathbb U} z^k \D \mathbb E \mu_Q = \mathbb E \left[\frac{1}{N} \sum_j \lambda_j^k \right] = \mathbb E\left[ \frac{1}{N} \tr Q^k \right].
\end{align}
Uniform measure on $\mathbb U$, given by $\D \mu = \frac{1}{2\pi \I} \frac{\D z}{z}$, is uniquely characterized by the fact that
\begin{align}
\int_{\mathbb U} z^k \D \mu = 0
\end{align}
for $k \in \mathbb Z \setminus \{0\}$ and equal to unity for $k = 0$.  And so, it remains to deduce
\begin{align}
\int_{\mathbb U} z^{-k} \D \mathbb E \mu_Q = 0, \quad k > 0.
\end{align}
But this follows from the fact that $\tr Q^{-k} = \tr (Q^k)^T = \tr Q^k$.
\end{proof}
\begin{definition}
Given a sequence of orthogonal (or unitary) random matrices $(Q\pp{N})_{n \geq 1}$, $Q\pp{N} \in \mathrm O(N)$ (or $\mathrm{U}(N)$), $N = N(n)$, $N(n)$ strictly increasing, then the eigenvalues of the sequence are said to be almost surely uniform if for each $f \in \mathbb C(\mathbb U)$
\begin{align}
\lim_{n \to \infty} \int_{\mathbb U} f \D \mu_{Q\pp N}  = \frac{1}{2 \pi \I}\int_{\mathbb U} f(z) \frac{\D z}{z} \quad \text{almost surely}.
\end{align}
\end{definition}

The following is classical but we prove it for completeness.

\begin{theorem}\label{t:var}
Given a sequence of random matrices $(Q\pp N)_{n \geq 1}$, $Q\pp N \in \mathrm O(N)$, $N = N(n)$, $N(n)$ strictly increasing, assume that $Q \pp N$ has a uniform eigenvalue distribution for each $n$.  Suppose for each $k=1,2,\ldots$ that
\begin{align}
\mathbb E \left[ \frac{1}{N^2}\left( \tr (Q\pp{N})^k \right)^2 \right] \leq C_k n^{-1-c_k},
\end{align}
for some constants $C_k,c_k > 0$.  Then the eigenvalues of the sequence are almost surely uniform.
\end{theorem}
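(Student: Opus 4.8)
The plan is to reduce the almost-sure weak convergence of the empirical spectral measures $\mu_{Q\pp N}$ to the uniform measure on $\mathbb{U}$ to the almost-sure vanishing of the normalized traces $\frac{1}{N}\tr (Q\pp N)^k$ for each fixed $k \ge 1$, and then to establish the latter via Markov's inequality together with the Borel--Cantelli lemma.

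First I would note that, since $Q\pp N$ is a real matrix, its characteristic polynomial has real coefficients, so its non-real eigenvalues occur in conjugate pairs and $\tr (Q\pp N)^k = \sum_j \lambda_j^k$ is real for every $k \in \mathbb{Z}$; moreover $\tr (Q\pp N)^{-k} = \tr ((Q\pp N)^k)^T = \tr (Q\pp N)^k$, so it suffices to treat $k \ge 1$ (the case $k = 0$ being trivial). Fix $k \ge 1$ and put $X_n := \frac{1}{N}\tr (Q\pp N)^k \in \mathbb{R}$. Since $Q\pp N$ has a uniform eigenvalue distribution, the trace characterization of the previous theorem gives $\mathbb{E} X_n = 0$, while the hypothesis reads $\mathbb{E}[X_n^2] \le C_k n^{-1-c_k}$. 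Hence, for any $\epsilon > 0$, Markov's inequality gives
\[
\mathbb{P}\bigl( |X_n| > \epsilon \bigr) = \mathbb{P}\bigl(X_n^2 > \epsilon^2\bigr) \le \epsilon^{-2}\,\mathbb{E}[X_n^2] \le \epsilon^{-2} C_k\, n^{-1-c_k},
\]
which is summable over $n$ because $c_k > 0$. By Borel--Cantelli, $X_n \to 0$ almost surely; call the corresponding full-measure event $\Omega_k$, and let $\Omega_0$ be the sure event on which $\frac{1}{N}\tr (Q\pp N)^0 = 1$. Put $\Omega_\infty := \bigcap_{k \ge 0}\Omega_k$, a countable intersection of full-measure events, hence itself of full measure.

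On $\Omega_\infty$ we have $\int_{\mathbb{U}} z^k \,\D\mu_{Q\pp N} = \frac{1}{N}\tr (Q\pp N)^k \to \frac{1}{2\pi\I}\int_{\mathbb{U}} z^k \frac{\D z}{z}$ as $n \to \infty$ for every $k \in \mathbb{Z}$, by the characterization of uniform measure recalled earlier, and by linearity the same limit holds with $z^k$ replaced by any trigonometric polynomial. To pass to a general $f \in C(\mathbb{U})$, fix $\omega \in \Omega_\infty$ and $\epsilon > 0$, and use Fej\'er's theorem (or Stone--Weierstrass) to choose a trigonometric polynomial $p$ with $\|f - p\|_\infty < \epsilon$. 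Since $\mu_{Q\pp N}$ and the uniform measure are both probability measures,
\[
\Bigl| \int_{\mathbb{U}} f \,\D\mu_{Q\pp N} - \frac{1}{2\pi\I}\int_{\mathbb{U}} f(z)\frac{\D z}{z} \Bigr| \le 2\epsilon + \Bigl| \int_{\mathbb{U}} p \,\D\mu_{Q\pp N} - \frac{1}{2\pi\I}\int_{\mathbb{U}} p(z)\frac{\D z}{z} \Bigr|,
\]
and the last term tends to $0$ on $\Omega_\infty$; thus the $\limsup$ of the left side is at most $2\epsilon$, and since $\epsilon$ was arbitrary the limit is $0$. This holds on the single full-measure event $\Omega_\infty$, which is precisely the assertion that the eigenvalues of the sequence are almost surely uniform.

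Regarding difficulty: there is no genuinely hard step here --- this is the standard ``second-moment estimate, plus Borel--Cantelli, plus density of trigonometric polynomials'' argument. The only points requiring a little care are (i) observing that $\tr (Q\pp N)^k$ is real and that negative powers reduce to positive powers, so that the hypothesized bound on $\mathbb{E}[(\tr (Q\pp N)^k)^2/N^2]$ is exactly the variance bound needed; and (ii) fixing a single exceptional null set by intersecting over $k$ \emph{before} invoking the approximation argument (the separability of $C(\mathbb{U})$ makes this automatic, but the explicit construction is cleaner). The role of the exponent $-1-c_k$, as opposed to merely $-1$, is only to make the series $\sum_n n^{-1-c_k}$ converge so that Borel--Cantelli applies directly.
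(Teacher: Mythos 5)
Your proposal is correct and follows essentially the same route as the paper's proof: a second-moment (Chebyshev/Markov) bound, summability from the exponent $-1-c_k$, Borel--Cantelli to get $\frac{1}{N}\tr (Q\pp N)^k \to 0$ almost surely for each $k$, and then approximation of a general $f \in C(\mathbb U)$ by trigonometric polynomials on a single countable intersection of full-measure events. If anything, your explicit construction of $\Omega_\infty$ by intersecting over $k$ before the approximation step is slightly cleaner than the paper's bookkeeping.
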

\begin{proof}
First, by the Chebyshev inequality, for $k > 0$
\begin{align}
\mathbb P\left( \frac{1}{N}\left| \tr (Q\pp{N})^k \right| \geq \epsilon \right) \leq \frac{C_k n^{-1-c_k}}{\epsilon^2}.
\end{align}
Then, because
\begin{align}
\sum_{n = 1}^\infty \mathbb P\left( \frac{1}{N}\left| \tr (Q\pp{N})^k \right| \geq \epsilon \right) < \infty,
\end{align}
the Borel--Cantelli lemma implies $\mathbb P \left( \frac{1}{N}\left| \tr (Q\pp{N})^k \right| \geq \epsilon  ~ \text{ infinitely often} \right) = 0$ for every $\epsilon > 0$.  Now let $\epsilon_j \to 0$ be a sequence and $\Omega^c = \bigcup_j \left\{ \frac{1}{N}\left| \tr (Q\pp{N})^k \right| \geq \epsilon_j  ~ \text{ infinitely often} \right\}$.  For $\omega \in \Omega$, for each $j$, $\frac{1}{N}\left| \tr (Q\pp{N}(\omega))^k \right| < \epsilon_j$ for sufficiently large $n$.  Thus $\frac{1}{N}\left| \tr (Q\pp{N})^k \right|$ converges almost surely to zero.  This shows that
\begin{align}
  \frac{1}{N} \tr (Q\pp{N})^k \to  \mathbb E \frac{1}{N} \tr (Q\pp{N})^k~~~\text{ almost surely},
\end{align}
for every $k$, as the expectation on the right is only non-zero for $k = 0$.  

For\footnote{We use $C(U)$ to denote continuous, complex-valued functions on a set $U$.} $f \in C(\mathbb U)$ and a sequence $\epsilon_j \to 0$, approximate
\begin{align}
\sup_{z \in \mathbb U} \left| f(z) - \sum_{\ell = -M_j}^{M_j} a_\ell(j) z^\ell \right| \leq \epsilon_j.
\end{align}
Then if $\mu$ is uniform measure on $\mathbb U$
\begin{align}
\left| \int_{\mathbb U} f \D\mu_{Q\pp{N}} - \int_{\mathbb U} f \D\mu \right| \leq 2 \epsilon_j + \left| \sum_{\ell = -M_j}^{M_j}  \int_{\mathbb U} a_\ell(j) z^\ell (\D\mu_{Q\pp{N}} - \D\mu) \right|.
\end{align}
Since the last term tends to zero almost surely for each $j$ let $\Omega_j$ be the set on which convergence occurs and define $\Omega ' = \bigcup_j \Omega_j$.  Then on  $\Omega'$, $\mathbb P(\Omega') = 1$,
\begin{align}
\limsup_{n \to \infty} \left| \int_{\mathbb U} f \D\mu_{Q\pp{N}} - \int_{\mathbb U} f \D\mu \right| \leq 2 \epsilon_j, \quad \text{for all } j.
\end{align}
The establishes the theorem.
\end{proof}

In particular, if a sequence $(Q\pp N)_{n \geq 1}$, $N = N(n)$, $Q\pp N \in \mathrm O(N)$ is almost surely uniform one can measure the arc length of $A_{\phi,\psi} = \{ \E^{\I \theta}, 0 \leq \phi \leq \theta \leq \psi \leq 2 \pi\}$ by counting either the number of eigenvalues it contains:  If $|U|$ is the cardinality of the set $U$, then
\begin{align}\label{e:as}
\psi - \phi = 2 \pi \lim_{n \to \infty} \frac{| \{ \lambda : \lambda \in \sigma( Q\pp{N} ) \cap A_{\phi,\psi} \} |}{N}~~ \text{ almost surely,}
\end{align}
or by sampling a number of independent copies $(Q\pp{N}_1,Q\pp{N}_2, \ldots)$
\begin{align}\label{e:lln}
\psi - \phi = 2 \pi \lim_{m \to \infty}   \sum_{j=1}^m \frac{| \{ \lambda : \lambda \in \sigma( Q\pp{N}_j ) \cap A_{\phi,\psi} \} |}{mN}~~ \text{ almost surely,}
\end{align}
for fixed $n$ (by the Strong Law of Large Numbers).  It is important to note that \eqref{e:lln} holds because $\mathbb P( \text{more than one eigenvalue in } B) \to 0$ as the measure of $B \to 0$ for any $B \subset \mathbb U$.

The link between the properties of butterfly matrices that we establish here and the properties of the eigenvalues of Haar matrices in $\mathrm{U}(2^n)$ is the most clear. The following theorem is classical.

\begin{theorem}[\cite{Diaconis1994}]
A Haar matrix $U^{(N)} \in \mathrm{U}(N)$ has a uniform eigenvalue distribution.  As $N \to \infty$ the eigenvalues of $U^{(N)}$ are almost surely uniform.  A Haar matrix $Q^{(N)} \in \mathrm{O}(N)$ does not have a uniform eigenvalue distribution for finite $N$, yet it satisfies \eqref{e:as}.  
\end{theorem}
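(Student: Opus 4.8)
The plan is to treat the unitary and orthogonal cases separately, in each case reducing matters to the Fourier characterization of uniform measure recalled above ($\mathbb E\mu_Q$ is uniform iff $\int_{\mathbb U}z^k\,\D\mathbb E\mu_Q=0$ for all $k\neq0$) and, for the almost-sure parts, to the Borel--Cantelli argument in the proof of Theorem~\ref{t:var}. For the unitary case: if $U\pp{N}\in\mathrm U(N)$ is Haar, then $\E^{\I\theta}U\pp{N}$ is again Haar distributed for every $\theta\in\mathbb R$ (since $\E^{\I\theta}I\in\mathrm U(N)$ and left translation preserves Haar measure), so
\[
\mathbb E[\tr (U\pp{N})^k]=\mathbb E[\tr (\E^{\I\theta}U\pp{N})^k]=\E^{\I k\theta}\,\mathbb E[\tr (U\pp{N})^k]\quad\text{for all }\theta ,
\]
which forces $\mathbb E[\tr (U\pp{N})^k]=0$ for every $k\neq0$. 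As $\int_{\mathbb U}z^k\,\D\mathbb E\mu_{U\pp{N}}=\mathbb E[N^{-1}\tr(U\pp{N})^k]$, all nonzero Fourier--Stieltjes coefficients of $\mathbb E\mu_{U\pp{N}}$ vanish, hence $\mathbb E\mu_{U\pp{N}}$ is uniform. For the almost-sure statement I would rerun the proof of Theorem~\ref{t:var} with $(\tr Q\pp{N})^2$ replaced by $|\tr (U\pp{N})^k|^2$, $k\ge1$: the Diaconis--Shahshahani identity $\mathbb E[\,|\tr (U\pp{N})^k|^2\,]=\min(k,N)$ from \cite{Diaconis1994} gives $\mathbb E[N^{-2}|\tr(U\pp{N})^k|^2]\le k/N^2$, which is summable in $N$, so Borel--Cantelli yields $N^{-1}\tr(U\pp{N})^k\to0$ almost surely for each $k\ge1$; since $N^{-1}\tr(U\pp{N})^{-k}=\overline{N^{-1}\tr(U\pp{N})^k}$ this also covers $k<0$, and the (Laurent-)polynomial approximation step of Theorem~\ref{t:var} finishes the proof that the eigenvalues of $(U\pp{N})$ are almost surely uniform.

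Next I would show the orthogonal Haar matrix is \emph{not} uniform at finite $N$. Right multiplication by a signed permutation matrix preserves Haar measure on $\mathrm O(N)$, and the resulting sign-flip and permutation symmetries force $\mathbb E[Q_{ij}Q_{k\ell}]=N^{-1}\delta_{ik}\delta_{j\ell}$, so that
\[
\mathbb E[\tr (Q\pp{N})^2]=\sum_{i,j}\mathbb E[Q_{ij}Q_{ji}]=\sum_{i,j}\frac{\delta_{ij}}{N}=1 .
\]
Hence $\mathbb E[N^{-1}\tr(Q\pp{N})^2]=1/N\neq0$, i.e.\ $\int_{\mathbb U}z^2\,\D\mathbb E\mu_{Q\pp{N}}\neq0$, so $Q\pp{N}\in\mathrm O(N)$ does not have a uniform eigenvalue distribution for any finite $N$.

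It remains to prove \eqref{e:as}, which is the subtle part, because Theorem~\ref{t:var} does not apply verbatim: $Q\pp{N}$ is not uniform at finite $N$. The idea is that this obstruction vanishes in the limit. For each fixed $k\ge1$ the quantities $\mathbb E[\tr (Q\pp{N})^k]$ and $\mathrm{Var}\,\tr (Q\pp{N})^k$ remain bounded (indeed convergent) as $N\to\infty$; this is part of the Diaconis--Shahshahani analysis of the classical compact groups \cite{Diaconis1994}, and can also be extracted directly from the orthogonal Weingarten calculus. Consequently $\mathbb E[N^{-1}\tr(Q\pp{N})^k]=O(N^{-1})\to0$ while $\mathbb E[\,|N^{-1}\tr(Q\pp{N})^k|^2\,]=O(N^{-2})$ is summable in $N$, so a minor modification of the Chebyshev--Borel--Cantelli step of Theorem~\ref{t:var} (centering by the vanishing mean) gives $N^{-1}\tr(Q\pp{N})^k\to0$ almost surely for each $k\ge1$ (and $\tr(Q\pp{N})^{-k}=\tr(Q\pp{N})^k$ since $Q\pp{N}$ is real orthogonal), after which the trigonometric-polynomial approximation step gives $\int_{\mathbb U}f\,\D\mu_{Q\pp{N}}\to\frac{1}{2\pi\I}\int_{\mathbb U}f(z)\,\frac{\D z}{z}$ almost surely for every $f\in C(\mathbb U)$. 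Sandwiching the indicator of the arc $A_{\phi,\psi}$ between continuous functions and using that the uniform measure is atomless then yields \eqref{e:as}.

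The routine parts are the unitary case and the identity $\mathbb E[\tr(Q\pp{N})^2]=1$. The main obstacle is the last step: one must go beyond Theorem~\ref{t:var} and control the rate at which the finite-$N$ obstruction to uniformity disappears, which hinges on uniform-in-$N$ bounds for the mean and (especially) the variance of $\tr(Q\pp{N})^k$ --- the Diaconis--Shahshahani moment estimates for $\mathrm O(N)$. If one preferred not to cite these, reproving them --- via the orthogonal Weingarten calculus, or via the Pfaffian structure of the $\mathrm O(N)$ eigenvalue process --- would be the bulk of the work.
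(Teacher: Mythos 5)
The paper does not actually prove this statement: it is quoted as a classical result of Diaconis--Shahshahani, with only the one-line remark that almost sure uniformity follows from $\mathbb E\bigl[\,|\tr (U^{(N)})^k|^2\,\bigr]=k$. Your proposal is correct and fills in the proof along exactly the route the paper indicates --- the moment identities of \cite{Diaconis1994} (invariance giving $\mathbb E[\tr(U^{(N)})^k]=0$, the second-moment bounds, and $\mathbb E[\tr(Q^{(N)})^2]=1$ for the failure of uniformity on $\mathrm O(N)$) combined with the Chebyshev/Borel--Cantelli and polynomial-approximation machinery of Theorem~\ref{t:var} --- so there is nothing to contrast.
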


\noindent Almost sure uniformity follows from the fact that $\mathbb E \left[\left| \tr \left( U\pp{N} \right)^k \right|^2 \right] = k$ \cite{Diaconis1994} .       

\section{Haar-butterfly matrices}\label{s:haarbutt}

Consider the class of butterfly matrices denoted by $\mathrm B(2^n)$, $n \geq 1$ and defined recursively by
\begin{align}
\begin{bmatrix} \cos \theta A & \sin \theta A \\ - \sin \theta A & \cos \theta A \end{bmatrix}, \quad 0 \leq \theta \leq 2 \pi,
\end{align}
where $A \in \mathrm B(2^{n-1})$ and  $\mathrm B(1) = \left\{ \begin{bmatrix} 1 \end{bmatrix} \right\}$.  We claim that $\mathrm B(2^n)$ is a subgroup of $\mathrm{SO}(2^n)$.  Indeed, this is clear for $n = 0$.  Assuming the claim for $\mathrm B(2^{n-1})$, let $A,B \in \mathrm B(2^{n-1})$
\begin{align}
\begin{bmatrix} \cos \theta A & \sin \theta A \\ - \sin \theta A & \cos \theta A \end{bmatrix}&\begin{bmatrix} \cos \varphi B & \sin \varphi B \\ - \sin \varphi B & \cos \varphi B \end{bmatrix} \\
&= \begin{bmatrix} AB & 0 \\ 0 & AB \end{bmatrix} \begin{bmatrix}\cos \theta I & \sin \theta I \\ - \sin \theta I & \cos \theta I \end{bmatrix}\begin{bmatrix} \cos \varphi I & \sin \varphi I \\ - \sin \varphi I & \cos \varphi I \end{bmatrix}\\
& = \begin{bmatrix} AB & 0 \\ 0 & AB \end{bmatrix} \begin{bmatrix}\cos (\theta + \varphi) I & \sin (\theta+ \varphi) I \\ - \sin (\theta + \varphi) I & \cos (\theta + \varphi) I \end{bmatrix}\\
& = \begin{bmatrix}\cos (\theta + \varphi) AB & \sin (\theta+ \varphi) AB \\ - \sin (\theta + \varphi) AB & \cos (\theta + \varphi) AB \end{bmatrix} \in \mathrm B(2^n).
\end{align}
Then taking $\varphi = - \theta$ one can see that an inverse element exists. Therefore $\mathrm B(2^n)$ is a subgroup of $\mathrm{SO}(2^n)$ for every $n$.

\begin{definition}
Haar-butterfly matrices are simple random butterfly matrices $B_s\pp{N}(\Sigma_{\mathrm{R}})$ where $\Sigma_{\mathrm{R}} = ( (C_j,S_j) )_{j\geq 0}$, $C_j = \cos \theta_j I, ~ S_j = \sin \theta_j I$ and $(\theta_j)_{j \geq 0}$ are iid uniform on $[0, 2\pi]$.  
\end{definition}

The name in this definition is justified by the following result.
\begin{proposition}\label{p:hb}
The measure induced on $\mathrm B(2^n)$ by Haar-butterfly matrices coincides with Haar measure on $\mathrm B(2^n)$.
\end{proposition}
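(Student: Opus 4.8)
The strategy is to identify $\mathrm B(2^n)$ as a group that is a homomorphic image (indeed essentially isomorphic) of a direct product of copies of $\mathrm{SO}(2)$, and then to check that the Haar-butterfly construction pushes the product Haar measure (= product of uniform measures on $[0,2\pi]$) forward to Haar measure on $\mathrm B(2^n)$. The key observation, extracted from the subgroup computation just above the proposition, is that multiplication in $\mathrm B(2^n)$ only adds the angles: if one writes a typical element built from angles $\theta_0,\theta_1,\ldots,\theta_{n-1}$, then the recursive formula together with the displayed identity
\begin{align}
\begin{bmatrix} \cos \theta A & \sin \theta A \\ - \sin \theta A & \cos \theta A \end{bmatrix}\begin{bmatrix} \cos \varphi B & \sin \varphi B \\ - \sin \varphi B & \cos \varphi B \end{bmatrix} = \begin{bmatrix}\cos (\theta + \varphi) AB & \sin (\theta+ \varphi) AB \\ - \sin (\theta + \varphi) AB & \cos (\theta + \varphi) AB \end{bmatrix}
\end{align}
shows, by induction on $n$, that the map $\Phi_n \colon (\mathbb R/2\pi\mathbb Z)^n \to \mathrm B(2^n)$ sending $(\theta_0,\ldots,\theta_{n-1})$ to the butterfly matrix with those angles is a continuous surjective group homomorphism, where the source carries coordinatewise addition. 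I would first establish this homomorphism property carefully, since it is the structural heart of the argument.

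Next I would invoke uniqueness of Haar measure. The torus $\mathbb T^n := (\mathbb R/2\pi\mathbb Z)^n$ is a compact group whose Haar measure is the normalized product Lebesgue measure, i.e. exactly the law of $(\theta_0,\ldots,\theta_{n-1})$ with the $\theta_j$ iid uniform on $[0,2\pi]$. A standard fact is that the pushforward of Haar measure under a continuous surjective homomorphism of compact groups is (translation-)invariant on the target: for $g = \Phi_n(\mathbf t)$ and a Borel set $S \subset \mathrm B(2^n)$, invariance of $\mathbb T^n$-Haar measure under translation by $\mathbf t$ gives $(\Phi_n)_*\lambda(gS) = \lambda(\Phi_n^{-1}(gS)) = \lambda(\mathbf t + \Phi_n^{-1}(S)) = \lambda(\Phi_n^{-1}(S)) = (\Phi_n)_*\lambda(S)$, using $\Phi_n^{-1}(gS) \supseteq \mathbf t + \Phi_n^{-1}(S)$ and, for the reverse inclusion, that $\Phi_n$ is a homomorphism so $\Phi_n(\mathbf s) \in gS$ iff $\Phi_n(\mathbf s - \mathbf t) \in S$. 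Since $(\Phi_n)_*\lambda$ is also a Borel probability measure and $\mathrm B(2^n)$ is compact Hausdorff, the uniqueness clause of the Haar theorem quoted in the excerpt forces $(\Phi_n)_*\lambda$ to equal normalized Haar measure on $\mathrm B(2^n)$. But $(\Phi_n)_*\lambda$ is by construction precisely the law of the Haar-butterfly matrix $B_{\mathrm s}^{(N)}(\Sigma_{\mathrm R})$, so the proposition follows.

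There is one measure-theoretic nicety to handle so that the pushforward is well-defined and the invariance computation is legitimate: $\Phi_n$ must be measurable (it is, being continuous) and $\Phi_n^{-1}(S)$ must be Borel for Borel $S$ (true since $\Phi_n$ is continuous), and one wants $\Phi_n$ to be a quotient map onto its image so that Borel sets of $\mathrm B(2^n)$ pull back correctly — since $\mathbb T^n$ is compact and $\mathrm B(2^n)$ is Hausdorff, $\Phi_n$ is automatically a closed map, hence a topological quotient onto $\mathrm B(2^n)$, which is all that is needed. (For concreteness one could even note $\Phi_n$ is injective and hence a homeomorphism, by checking that distinct angle-tuples give distinct matrices — looking at, say, the appropriate entries — but this is not required for the proof.)

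**Anticipated main obstacle.** The genuinely load-bearing step is the induction establishing that $\Phi_n$ is a homomorphism, i.e. that the Haar-butterfly distribution really is the pushforward of product-uniform measure under a \emph{group} map. Once that is in hand, the rest is the routine ``pushforward of Haar is Haar'' argument plus uniqueness. A subtle point worth being explicit about is that in a \emph{simple} butterfly matrix the \emph{same} block $A$ appears in both the top and bottom rows and is built from the \emph{same} angles at every level; the commutation identity above is what makes the product of two such matrices again have this simple form with summed angles, and one should make sure the inductive hypothesis is stated for the full parametrized family $\mathrm B(2^{n-1})$ (not just for a fixed matrix) so that the product $AB$ of the lower-level blocks stays inside $\mathrm B(2^{n-1})$ and is itself $\Phi_{n-1}$ of the summed lower-level angles.
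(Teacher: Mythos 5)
Your proposal is correct and takes essentially the same approach as the paper: the paper likewise proves left-invariance of the induced measure via the angle-addition identity (using that $\theta_{n-1}+\theta \bmod 2\pi$ is again uniform) together with induction on $n$, and then appeals to uniqueness of Haar measure, even remarking at the end that the distribution is the pushforward of uniform measure on $[0,2\pi)^n$ under the (proper) parametrization map. Your version merely makes the torus homomorphism $\Phi_n$ explicit, which is a clean way to package the identical computation.
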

\begin{proof}
We show left-invariance of the distribution.  Let $B \in \mathrm B(2^n)$ be a Haar-butterfly matrix and let $B'\in \mathrm B(2^n)$ be a butterfly matrix. First, $n = 0$ is clear. Assume the claim for $n-1$.  Then we have 
\begin{align}
B' B = \begin{bmatrix} \cos ( \theta_{n-1} + \theta) AC & \sin ( \theta_{n-1} + \theta) AC \\ - \sin ( \theta_{n-1} + \theta) AC & \cos ( \theta_{n-1} + \theta) AC \end{bmatrix}
\end{align}
where $A \in \mathrm B(2^{n-1})$ and $C$ is a Haar-butterfly matrix in $\mathrm B(2^{n-1})$.  Then $\cos ( \theta_{n-1} + \theta) = \cos ( \theta_{n-1} + \theta \mod 2 \pi)$ and $\theta_{n-1} + \theta \mod 2 \pi$ has the same distribution as $\theta_{n-1}$ and $AC$ is a Haar-butterfly matrix by the inductive hypothesis.  It is not necessary, but if one wants to check inner and outer approximation, it follows directly because this is the smooth forward of uniform measure on $[0,2\pi)^n$, i.e. it is a proper mapping.
\end{proof}

\subsection{Uniformity of eigenvalue distributions}
The following simple lemma immediately applies that Haar-butterfly matrices have a uniform eigenvalue distribution.
\begin{lemma}
Let $A \in \mathrm O(N)$ be any orthogonal matrix.  Let $\theta$ be uniformly distributed on $[0,2\pi]$ and independent of $A$, if  $A$ is random.  Then
\begin{align}
\hat A = \begin{bmatrix} \cos \theta A & \sin \theta A \\ - \sin \theta A & \cos \theta A \end{bmatrix}
\end{align}
has a uniform eigenvalue distribution.
\end{lemma}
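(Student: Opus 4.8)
The plan is to apply the trace characterization of uniform eigenvalue distributions proved above: since $\hat A \in \mathrm O(2N)$, it suffices to show that $\mathbb E\!\left[\frac{1}{2N}\tr \hat A^k\right] = 0$ for every integer $k \geq 1$, the case $k = 0$ being immediate as $\hat A^0 = I_{2N}$. The structural observation driving everything is that $\hat A$ factors as a product of two commuting matrices,
\[
\hat A = \begin{bmatrix} \cos\theta\, I & \sin\theta\, I \\ -\sin\theta\, I & \cos\theta\, I \end{bmatrix}\begin{bmatrix} A & 0 \\ 0 & A \end{bmatrix} =: R(\theta)\, D_A ,
\]
exactly as in the displayed computation establishing the group law for $\mathrm B(2^n)$. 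The matrices $R(\theta)$ and $D_A = \mathrm{diag}(A,A)$ commute, and $R(\theta)^k = R(k\theta)$ by the same rotation addition formula used there. Hence
\[
\hat A^k = R(k\theta)\, D_A^k = \begin{bmatrix} \cos(k\theta)\, A^k & \sin(k\theta)\, A^k \\ -\sin(k\theta)\, A^k & \cos(k\theta)\, A^k \end{bmatrix}.
\]

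Next I would read off $\tr \hat A^k = 2\cos(k\theta)\,\tr A^k$ and take expectations. Because $\theta$ is independent of $A$ and $|\tr A^k| \leq N$ (as $A$ is orthogonal), all quantities in sight are integrable and, for $k \geq 1$,
\[
\mathbb E\!\left[\frac{1}{2N}\tr \hat A^k\right] = \frac{1}{N}\,\mathbb E[\cos(k\theta)]\,\mathbb E[\tr A^k] = 0 ,
\]
since $\mathbb E[\cos(k\theta)] = \frac{1}{2\pi}\int_0^{2\pi}\cos(k\theta)\,\D\theta = 0$. When $A$ is deterministic one simply replaces $\mathbb E[\tr A^k]$ by $\tr A^k$, and the conclusion is unchanged. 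By the trace characterization, $\hat A$ has a uniform eigenvalue distribution.

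There is no deep obstacle in this argument; the only step requiring a moment's thought is the reduction to powers of $\hat A$, namely recognizing the commuting factorization $\hat A = R(\theta) D_A = D_A R(\theta)$, so that $\hat A^k = R(k\theta)\,\mathrm{diag}(A^k,A^k)$ and the trace collapses to $2\cos(k\theta)\tr A^k$. Once that identity is in hand, the result is nothing more than the vanishing of the nonzero Fourier coefficients of the uniform law on $[0,2\pi]$ together with the independence of $\theta$ and $A$.
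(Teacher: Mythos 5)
Your proposal is correct and follows essentially the same route as the paper: compute $\hat A^k$ (the paper states the block formula directly, you justify it via the commuting factorization $\hat A = R(\theta)D_A$), read off $\tr \hat A^k = 2\cos(k\theta)\tr A^k$, and conclude from $\mathbb E[\cos(k\theta)]=0$ together with the trace characterization of uniform eigenvalue distributions.
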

\begin{proof}
Because this matrix is orthogonal, it suffices to check that the expectation of the trace of every positive power is zero:
\begin{align}
\hat A^k = \begin{bmatrix} \cos \theta A & \sin \theta A \\ - \sin \theta A & \cos \theta A \end{bmatrix}^k = \begin{bmatrix} \cos k\theta A^k & \sin k\theta A^k \\ - \sin k\theta A^k & \cos k\theta A^k \end{bmatrix}.
\end{align}
From this it directly follows that $\mathbb E_\theta \hat A^k = 0$, and hence the expectation of the trace vanishes.
\end{proof}
The eigenvalues of Haar-butterfly matrices are also almost surely uniform.
\begin{theorem}  
The sequence $(Q\pp{N})_{n \geq 1}$, $N = 2^n$, where $Q\pp{N} \sim B_s\pp{N}(\Sigma_{\mathrm{R}})$ has eigenvalues that are almost surely uniform.
\end{theorem}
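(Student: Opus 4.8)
The plan is to verify the hypotheses of Theorem~\ref{t:var}. The lemma just above already shows that $Q\pp{N} \sim B_s\pp{N}(\Sigma_{\mathrm R})$ has a uniform eigenvalue distribution for every $n$, so the only thing left is to bound $\mathbb E\left[ \frac{1}{N^2}\left( \tr (Q\pp{N})^k\right)^2\right]$ for each fixed $k \geq 1$, and in fact I expect exponential decay in $n$, far beyond the polynomial rate $n^{-1-c_k}$ that the theorem demands.

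First I would unwind the recursion. By definition of a simple random butterfly matrix, $Q\pp{N}$ has the form $\begin{bmatrix} \cos\theta_{n-1} A & \sin\theta_{n-1} A \\ -\sin\theta_{n-1} A & \cos\theta_{n-1} A\end{bmatrix}$, where $A \sim B_s\pp{N/2}(\Sigma_{\mathrm R})$ and $\theta_{n-1}$ is uniform on $[0,2\pi]$ and independent of $A$. The power identity established in the previous lemma gives $\tr (Q\pp{N})^k = 2 \cos(k\theta_{n-1}) \tr A^k$, so that $\left(\tr (Q\pp{N})^k\right)^2 = 4\cos^2(k\theta_{n-1}) \left(\tr A^k\right)^2$.

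Second, taking expectations and using the independence of $\theta_{n-1}$ and $A$ together with $\mathbb E[\cos^2(k\theta_{n-1})] = 1/2$ (valid since $k \geq 1$), I set $t_n := \mathbb E\left[\left(\tr (Q\pp{2^n})^k\right)^2\right]$ and obtain the recursion $t_n = 2 t_{n-1}$. Since $Q\pp 1 = \begin{bmatrix} 1\end{bmatrix}$ gives $t_0 = 1$, it follows that $t_n = 2^n = N$, hence $\mathbb E\left[\frac{1}{N^2}\left(\tr (Q\pp{N})^k\right)^2\right] = 2^{-n}$.

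Finally, $2^{-n} \leq C_k n^{-1-c_k}$ holds for suitable positive constants (one may even take $c_k = 1$ and $C_k = 2$, independent of $k$, since $\sup_{n \geq 1} n^2 2^{-n} < \infty$), so Theorem~\ref{t:var} applies and the eigenvalues of the sequence are almost surely uniform. There is no real analytic obstacle here; the only points requiring care are the independence bookkeeping that makes the expectation factor exactly, and the verification that the block-power formula of the previous lemma propagates through all levels of the recursion.
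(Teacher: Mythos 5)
Your proposal is correct and follows essentially the same route as the paper: both use the block-power identity $\tr (Q\pp{N})^k = 2\cos(k\theta_{n-1})\tr A^k$, the independence of $\theta_{n-1}$ from the lower-level matrix, and $\mathbb E[\cos^2(k\theta_{n-1})]=1/2$ to obtain the recursion $\mathbb E[(\tr (Q\pp{N})^k)^2] = 2\,\mathbb E[(\tr (Q\pp{N/2})^k)^2]$, yielding $\frac{1}{N^2}\mathbb E[(\tr (Q\pp{N})^k)^2] = 2^{-n}$ and then invoking Theorem~\ref{t:var}. Your added verification that $2^{-n}$ beats the required polynomial rate is a nice explicit touch the paper leaves implicit.
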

\begin{proof}
By Theorem~\ref{t:var} and Proposition~\ref{p:hb} it suffices to estimate the expectation of the trace of powers squared.  We have
\begin{align}\label{e:tr-struct}
\tr (Q\pp{N})^k = 2 \cos k \theta_{n-1} \tr (X)^k, \quad X \sim B_s\pp{n-1}(\Sigma_{\mathrm{R}}),
\end{align}
 where $\tr (Q\pp{N})^k$ and $X$ are independent. Therefore by first taking an expectation with respect to $\theta_{n-1}$
\begin{align}
\mathbb E  \left[ \left( \tr (Q\pp{N})^k \right)^2 \right] = 2 \mathbb E \left[ \left( \tr (Q\pp{N/2})^k \right)^2 \right].
 \end{align}
 From this we have
 \begin{align}\label{e:intout}
 \frac{1}{2^{2n}}\mathbb E\left[ \left( \tr (Q\pp{N})^k \right)^2 \right] = \frac{1}{2}  \frac{1}{2^{2n-2}} \mathbb E \left[ \left( \tr (Q\pp{n-1})^k \right)^2 \right],
 \end{align}
 or since $N = 2^{n}$, $\frac{1}{N^2}\mathbb E \left[ \left( \tr (Q\pp{N})^k \right)^2 \right] = 2^{-n}$ and the theorem follows from Theorem~\ref{t:var}.
\end{proof}

\subsection{Joint distribution on the eigenvalues}

Another natural question to ask is that of the joint distribution of the eigenvalues.  And while the uniformity results might lead one to speculate that the eigenvalues are iid on $\mathbb U$ or share a distribution related to the eigenvalues of a Haar matrix, one can see that dimensionality immediately rules this out: $\mathrm B(2^{n})$ is a manifold of dimension $n$.  What is true though, is that $n$ of the eigenvalues of $Q\pp{N} \sim B_s\pp{N}(\Sigma_{\mathrm{R}})$ are iid uniform on $\mathbb U$.

\begin{lemma}
Let $A \in \mathrm O(N)$ and $0 \leq \theta < 2 \pi$.  Then the arguments of the eigenvalues of
\begin{align}
\hat A = \begin{bmatrix} \cos \theta A & \sin \theta A \\ - \sin \theta A & \cos \theta A \end{bmatrix}
\end{align}
are given by $(\theta_j \pm \theta)_{j \geq 1}$ where $(\theta_j)_{j \geq 1}$ are the arguments of the eigenvalues of $A$.
\end{lemma}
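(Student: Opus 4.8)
The plan is to diagonalize $\hat A$ explicitly in terms of the eigenvectors of $A$. Start from the factorization
\[
\hat A = \begin{bmatrix} \cos\theta\, I & \sin\theta\, I \\ -\sin\theta\, I & \cos\theta\, I \end{bmatrix}\begin{bmatrix} A & 0 \\ 0 & A \end{bmatrix},
\]
whose two factors commute because $A$ commutes with $I$. Since $A \in \mathrm O(N)$ is normal it is diagonalizable over $\mathbb C$: there is a basis $v_1,\dots,v_N$ of $\mathbb C^N$ with $A v_j = \E^{\I\theta_j} v_j$. For each $j$ and each choice of sign I would substitute the vector $\begin{bmatrix} v_j \\ \pm\I v_j \end{bmatrix} \in \mathbb C^{2N}$ into $\hat A$; using $\cos\theta \pm \I\sin\theta = \E^{\pm\I\theta}$ and $-\sin\theta \pm \I\cos\theta = \pm\I\,\E^{\pm\I\theta}$ one gets
\[
\hat A \begin{bmatrix} v_j \\ \pm\I v_j \end{bmatrix} = \E^{\I\theta_j}\begin{bmatrix} (\cos\theta\pm\I\sin\theta)\, v_j \\ (-\sin\theta\pm\I\cos\theta)\, v_j \end{bmatrix} = \E^{\I(\theta_j\pm\theta)}\begin{bmatrix} v_j \\ \pm\I v_j \end{bmatrix}.
\]
This exhibits $2N$ eigenpairs of $\hat A$, with eigenvalue arguments $\theta_j \pm \theta$.

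The remaining point is to confirm that these $2N$ eigenvectors span $\mathbb C^{2N}$, so that the list is complete and no eigenvalue is omitted. Collecting $v_1,\dots,v_N$ into the invertible matrix $V$, the matrix whose columns are the constructed eigenvectors is $\begin{bmatrix} I & I \\ \I I & -\I I \end{bmatrix}\begin{bmatrix} V & 0 \\ 0 & V \end{bmatrix}$, and the first factor has determinant $\det(-2\I I) \neq 0$ by the block-determinant identity (its lower blocks commute). Hence the constructed list is exhaustive and the arguments of the eigenvalues of $\hat A$ are exactly $(\theta_j \pm \theta)_{j\ge 1}$, read modulo $2\pi$.

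An equivalent route, avoiding eigenvectors, is to compute the characteristic polynomial directly: since all the blocks are polynomials in $A$ and hence commute,
\[
\det(\lambda I_{2N} - \hat A) = \det\!\big( (\lambda I - \cos\theta\, A)^2 + \sin^2\theta\, A^2 \big) = \det(\lambda I - \E^{\I\theta} A)\,\det(\lambda I - \E^{-\I\theta} A),
\]
whose zeros are precisely $\E^{\I(\theta_j+\theta)}$ and $\E^{\I(\theta_j-\theta)}$, $j = 1,\dots,N$.

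There is no genuine obstacle here — the statement is essentially bookkeeping — but two small points should not be skipped. The first is the completeness step (equivalently, the justification of the block-determinant factorization), which is what upgrades ``these numbers are eigenvalues'' to ``these are all the eigenvalues''. The second is the remark that everything above holds verbatim at $\theta \in \{0,\pi\}$, where the $2\times 2$ rotation block has a repeated eigenvalue: the vectors $\begin{bmatrix} v_j \\ \pm\I v_j\end{bmatrix}$ are still eigenvectors and the formula $\theta_j \pm \theta$ (mod $2\pi$) remains correct.
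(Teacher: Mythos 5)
Your proposal is correct and, via the ``equivalent route'' at the end, coincides with the paper's own proof, which computes $\det(\hat A - \lambda I) = \det(A^2 - 2\lambda\cos\theta\, A + \lambda^2 I)$ by the commuting-blocks determinant identity and then evaluates the quadratic on eigenvectors of $A$. Your primary eigenvector construction $\begin{bmatrix} v_j \\ \pm \I v_j\end{bmatrix}$ is just the explicit diagonalized form of the same computation, with the added benefit that your spanning argument makes the completeness/multiplicity count explicit, a point the paper leaves implicit.
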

\begin{proof}
Consider the characteristic polynomial
\begin{align}
\det (\hat A - \lambda I) = \det \begin{bmatrix} \cos \theta A - \lambda I & \sin \theta A \\ - \sin \theta A & \cos \theta A - \lambda I\end{bmatrix} = \det ( A^2    - 2 \lambda \cos \theta A + \lambda^2 I).
\end{align}
Now, let $v$ be an eigenvector for $A$ with eigenvalue $\lambda_1$.  Then
\begin{align}
( A^2 - 2 \lambda \cos \theta A + \lambda^2 I)v = (\lambda_1^2  - 2 \lambda \lambda_1 \cos \theta  + \lambda^2) v = 0
\end{align}
if $\lambda = \E^{ \pm \I \theta } \lambda_1$. This establishes the lemma.
\end{proof}

This shows that the eigenvalues of $Q\pp{N} \sim B_s\pp{N}(\Sigma_{\mathrm{R}})$ are given by
\begin{align}
\exp \left( \I ( \pm \theta_0 \pm \theta_2 \pm \cdots \pm \theta_{n-1}) \right),
\end{align}
for all possible $2^n$ choices of signs where $(\theta_j)_{j \geq 0}$ are iid uniform on $[0, 2\pi]$ and coincide with the angles in the definition of $\Sigma_{\mathrm{R}}$. Such a formula arises in the analysis of the independence of Rademacher functions \cite[Chapter 1]{Kac1959}.

We pick a canonical set of angles $\hat \theta_0 = \sum_j \theta_j$
\begin{align}
\hat \theta_j = \hat \theta_0 - 2 \theta_j, \quad j \geq 1.
\end{align}
Then define $x_j = \cos \hat \theta_j$.  It follows that the joint density of these variables $\rho(x_0,\ldots,x_{n-1})$ is given by
\begin{align}
\rho(x_0,\ldots,x_{n-1}) = \frac{1}{\pi^n} \prod_{j=0}^{n-1} \frac{1}{\sqrt{1-x_j^2}} \mathbbm 1_{(-1,1)}(x_j).
\end{align}
It is convenient to use the $x_j$ variables because each $x_j$ corresponds to two eigenvalues $\pm \theta_j = \pm \cos^{-1} x_j$.  The remaining eigenvalues are determined directly from the $x_j$'s.  This demonstrates a clear departure from Haar measure on $\mathrm O(N)$ where the probability of two eigenvalues being close is much smaller \cite{Forrester2010}.

\subsection{Failure of a Central Limit Theorem for linear statistics}

For Haar matrices $U^{(N)} \in \mathrm{U}(N)$, from \cite{Diaconis1994,Diaconis2001}, it follows that $\mathbb E_{U^{(N)}}\left[\tr \left( U^{(N)} \right)^k \right] = 0$ and $\mathbb E \left[ \left| \tr \left( U^{(N)} \right)^k \right|^2 \right] = k$.  And thus it is natural to ask about convergence of
\begin{align}
  \frac{\tr (U^{(N)})^k - \mathbb E\left[\tr \left( U^{(N)} \right)^k \right]}{\sqrt{\mathrm{Var}\left( \tr (U^{(N)})^k \right)}} = \frac{\tr (U^{(N)})^k}{\sqrt{k}}  \quad \text{as} \quad N = 2^n \to \infty.
\end{align}
Indeed, it follows that this converges to a standard (complex) normal random variable \cite{Diaconis2001}.  Once can then ask about convergence of \emph{linear statistics:}
\begin{align}
  \sum_k a_k \tr (U^{(N)})^k.
\end{align}
These statistics can also be shown to converge, after appropriate scaling, to normal distributions \cite{Diaconis2001}.  This is called the central limit theorem (CLT) for linear statistics.  We now demonstrate that this fails to hold for Haar-butterfly matrices.

Let $Q\pp{N}$ be a Haar-butterfly matrix. One can immediately notice that Haar-butterfly matrices are different via the relation
\begin{align}
\mathbb E \left[ \left( \tr \left( Q^{(N)} \right)^k \right)^2 \right] = N, \quad N = 2^n.
\end{align}
Less cancellation forces this to grow, compared to $U\pp{N}$.  We now examine the convergence of
\begin{align}\label{e:clt}
 \frac{\left(\tr \left(Q^{(N)} \right)^k \right)^2}{N} \quad \text{as} \quad N = 2^n \to \infty,
\end{align}
which would have to converge to a chi-squared distribution if a CLT holds.  Simple considerations demonstrate that this is a martingale with respect to the sequence $(\theta_j)_{j=0}^\infty$ by considering \eqref{e:tr-struct} and basically following \eqref{e:intout}
\begin{align}
\frac{1}{N} \mathbb E_{\theta_{n-1}} \left( \tr\left(Q^{(N)} \right)^k \right)^2  =  \frac{2}{N} \left( \tr \left(Q^{(N/2)} \right)^k \right)^2
\end{align}
This also easily follows from the relation $\left(Q^{(N)} \right)^k = N \prod_{j=0}^{n-1} \cos k \theta_j$.   Motivated by this formula, we apply the Strong Law of Large Numbers to
\begin{align}
  \log \frac{\left( \tr \left(Q^{(N)} \right)^k\right)^2}{N} = \sum_{j=0}^{n-1} \log \cos^2 k\theta_j + n \log 2.
\end{align}
Then, using $\int_{0}^{2\pi} \log \cos^2 k\theta \frac{\D \theta}{2 \pi} = - 2 \log 2$, we have that as $n \to \infty$
\begin{align}
  \frac{1}{n} \sum_{j=0}^{n-1} \log \cos^2 k\theta_j \to - 2 \log 2 \quad \text{a.s.},\\
  \frac{1}{n} \log \frac{\left(\tr \left(Q^{(N)} \right)^k  \right)^2}{N} \to - \log 2 \quad \text{a.s.}.
\end{align}
Therefore \eqref{e:clt} tends to zero almost surely, and a CLT for linear statistics does not hold.

We note that if the eigenvalues of an orthogonal matrix $O\pp{N}$ satisfy $\lambda_j = \E^{i \theta_j}$ for $j = 1,2,\ldots,N$, where $(\theta_j)$ are iid on $[0,2\pi)$, then \eqref{e:clt} will converge to a non-trivial distribution.  So we see that \eqref{e:clt} is small in the case of Haar matrices (its $o(1)$), large for iid eigenvalues (it converges in distribution) and it sits somewhere between for Haar-butterfly matrices.  So, this matrix is in this sense ``less random'' than $U\pp{N}$ more random than $O\pp{N}$.  This is an indication it might be more effective at randomizing a linear system into a generic state than $O\pp{N}$.


\section{Another class of random butterfly matrices}\label{s:general}

Now consider the random butterfly matrices (no longer simple) defined by $B\pp{N}(\Sigma_{\mathrm{R}})$:
\begin{align}\label{e:rbf}
Q\pp{N} = \begin{bmatrix} \cos \theta_{n-1} A & \sin \theta_{n-1} B \\
-\sin \theta_{n-1} A & \cos \theta_{n-1} B \end{bmatrix},
\end{align}
where $A,B \sim B \pp{N/2}(\Sigma_{\mathrm{R}})$ are independent.  We refer to these as \emph{non-simple} butterfly matrices.  There is also no longer a natural group structure, but ``more randomness'' has been injected into the matrix.

\subsection{Uniformity of the eigenvalue distributions.}

To analyze the distribution of the entries of a $B\pp{N}(\Sigma_{\mathrm{R}})$ matrix, we think of it as a $2^{n-1} \times 2^{n-1}$ matrix of $2\times 2$ blocks.  The block at location $(i,j)$ is of the form
\begin{align}
\pm \left(\prod_{i=1}^m \cos \theta_j^{(i,j)} \right) \left( \prod_{j=m}^{n} \sin \theta_j^{(i,j)}  \right) R(\theta_0^{(j)}), \quad R(\theta) := \begin{bmatrix}
\cos \theta & \sin \theta \\ - \sin \theta & \cos \theta
\end{bmatrix},
\end{align}
for some $1\leq m \leq n$.  In other words, each $2\times 2$ block of the matrix is a product of sines and cosines that depend on both $(i,j)$, multiplied by a rotation matrix that depends only on $j$.  We use the notation
\begin{align}\label{e:cij}
c^{(n)}_{i,j} = c_{i,j} = \pm \left(\prod_{j=1}^m \cos \theta_j^{(i,j)} \right) \left( \prod_{j=m}^{n} \sin \theta_j^{(i,j)}  \right).
\end{align}
From this representation, we immediately obtain the following.
\begin{lemma}
Assume $Q\pp{N} \sim B\pp{N}(\Sigma_{\mathrm{R}})$ then $\mathbb E ( Q\pp{N})^k = 0$ for every $k$, and hence the eigenvalues of $Q\pp{N}$ are uniform.
\end{lemma}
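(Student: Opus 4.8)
The plan is to exploit the same structural recursion that drove the Haar-butterfly case: a power of $Q\pp{N}$ can be written as a $2\times 2$ block operation whose off-diagonal dependence on $\theta_{n-1}$ is through $\sin k\theta_{n-1}$ and whose diagonal dependence is through $\cos k\theta_{n-1}$, so that taking the expectation over $\theta_{n-1}$ first annihilates everything. Concretely, I would first record the block-$2\times2$ identity
\begin{align}
\begin{bmatrix} \cos\theta\, A & \sin\theta\, B \\ -\sin\theta\, A & \cos\theta\, B \end{bmatrix}^k
 = \begin{bmatrix} \cos k\theta\, M_k & \sin k\theta\, N_k \\ -\sin k\theta\, M_k' & \cos k\theta\, N_k' \end{bmatrix},
\end{align}
for suitable matrices $M_k,N_k,M_k',N_k'$ built from $A$ and $B$; this is checked by a one-line induction on $k$ using the angle-addition formulas exactly as in the simple case (the key point being that the scalar rotation factors $R(k\theta)$ commute past the $A,B$ blocks in the relevant products). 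Then $\tr (Q\pp{N})^k = \cos k\theta_{n-1}\,(\tr M_k + \tr N_k')$, and since $M_k,N_k'$ depend only on $A,B$ (hence are independent of $\theta_{n-1}$), taking $\mathbb E_{\theta_{n-1}}$ gives $\mathbb E\,\tr (Q\pp{N})^k = 0$ for $k\geq 1$. By the earlier theorem characterizing uniform eigenvalue distributions via vanishing trace-power expectations, this is exactly the claim.

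Alternatively — and this is probably the cleaner write-up — I would argue directly that $Q\pp{N}$ itself has the representation $Q\pp{N} = D\,R$ where one factors out a block-diagonal $\mathrm{diag}(A,B)$ and a "pure rotation" tensor $R(\theta_{n-1})\otimes I$, noting that these two factors do not commute in general (which is why there is no group structure), but that the characteristic polynomial still factors: as in the lemma in Section~\ref{s:haarbutt}, $\det(Q\pp{N}-\lambda I) = \det(AB - 2\lambda\cos\theta_{n-1}\,(\text{something}) + \lambda^2 I)$-type manipulations are more delicate here because $A\neq B$. So I expect the trace-power route to be the safe one. The induction hypothesis I would actually use is stronger than the stated conclusion: I would prove that $\mathbb E[(Q\pp{N})^k] = 0$ as a matrix (not just its trace), which is what the lemma literally states, and this stronger statement is what makes the induction close — one needs $\mathbb E[M_k]=\mathbb E[N_k]=\cdots=0$, and these reduce to $\mathbb E[(A)^j (B)^{k-j}]$-type expectations which, by independence of $A$ and $B$ and the inductive hypothesis $\mathbb E[A^j]=\mathbb E[B^j]=0$ for $j\geq1$, vanish whenever at least one exponent is positive; the only surviving term would be a pure power of $A$ or of $B$ with the other appearing to the zeroth power, but the block-power formula shows those terms always come multiplied by $\cos k\theta_{n-1}$ or $\sin k\theta_{n-1}$, killed by the $\theta_{n-1}$-expectation.

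The main obstacle is bookkeeping the block-power formula cleanly: unlike the simple case where $A=B$ and one gets the tidy $\hat A^k = \begin{bmatrix}\cos k\theta\,A^k & \sin k\theta\, A^k\\ -\sin k\theta\,A^k & \cos k\theta\,A^k\end{bmatrix}$, here the $(1,1)$ and $(2,2)$ blocks of $(Q\pp{N})^k$ are genuinely different polynomials in $A$ and $B$, and writing down a closed form for all four blocks is awkward. I would sidestep this by not needing a closed form at all: I only need the \emph{shape} $\begin{bmatrix}\cos k\theta\,(\cdot) & \sin k\theta\,(\cdot)\\ -\sin k\theta\,(\cdot) & \cos k\theta\,(\cdot)\end{bmatrix}$ with $\theta$-independent entries, and that shape is preserved under multiplication by $Q\pp{N}$ (one more block multiply, using $\cos k\theta\cos\theta - \sin k\theta\sin\theta = \cos(k+1)\theta$ etc., with the $A,B$ blocks carried along passively), which is a short induction. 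Combined with independence of $\theta_{n-1}$ from $(A,B)$ and the inductive vanishing of lower-dimensional trace powers, the result follows from Theorem stated earlier.
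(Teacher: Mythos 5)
There is a genuine gap: the block-power formula on which your whole argument rests is false once $A \neq B$. Already for $k=2$,
\begin{align*}
\begin{bmatrix} \cos\theta\, A & \sin\theta\, B \\ -\sin\theta\, A & \cos\theta\, B\end{bmatrix}^2
= \begin{bmatrix} \cos^2\theta\, A^2 - \sin^2\theta\, BA & \cos\theta\sin\theta\,(AB+B^2) \\ -\cos\theta\sin\theta\,(A^2 + BA) & \cos^2\theta\, B^2 - \sin^2\theta\, AB\end{bmatrix},
\end{align*}
and the $(1,1)$ block equals $\tfrac12(A^2 - BA) + \tfrac12\cos 2\theta\,(A^2+BA)$, which is \emph{not} $\cos 2\theta$ times a $\theta$-independent matrix: a zeroth harmonic survives. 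The shape is not preserved by one more block multiply either, since $\cos k\theta\cos\theta\, MA - \sin k\theta\sin\theta\, NA$ contains both a $\cos(k+1)\theta$ and a $\cos(k-1)\theta$ term unless $MA=NA$. Concretely, $\mathbb E_{\theta_{n-1}}\bigl[\tr (Q^{(N)})^2\bigr] = \tfrac12\tr\bigl((A-B)^2\bigr) \neq 0$ in general, so the outer angle alone does not annihilate anything. Your fallback induction has a second problem: the blocks of $(Q^{(N)})^k$ contain arbitrary length-$k$ words in $A$ and $B$ (e.g.\ $ABA$ for $k=3$, $ABAB$ for $k=4$), not just sorted products $A^jB^{k-j}$, and for interleaved words independence does not let you factor the expectation. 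Indeed $\mathbb E[BXB]$ need not vanish even when $\mathbb E[B^j]=0$ for all $j\geq 1$: take $B = R(\beta)$ with $\beta$ uniform and $X = e_1e_1^T$, which gives $\mathbb E[BXB] = \tfrac12\mathrm{diag}(1,-1)$. So the inductive hypothesis $\mathbb E[A^j]=\mathbb E[B^j]=0$ is not strong enough to close the induction.

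The paper's proof uses a different, and essential, structural observation based on the \emph{innermost} angles rather than the outermost one. Writing $Q^{(N)}$ as a $2^{n-1}\times 2^{n-1}$ array of $2\times 2$ blocks, the block at position $(i,j)$ has the form $c_{i,j}\,R(\theta_0^{(j)})$: a scalar built from sines and cosines of the higher-level angles, times a rotation whose angle depends only on the block column $j$. The $(i,j)$ block of $(Q^{(N)})^k$ is then a sum over paths of scalars times $R\bigl(\theta_0^{(j)}+\theta_0^{(j_1)}+\cdots+\theta_0^{(j_{k-1})}\bigr)$, a rotation by a nontrivial nonnegative-integer combination of the independent level-zero angles; since these are independent of all the scalar prefactors, the expectation over the $\theta_0^{(\cdot)}$ kills every term. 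That block decomposition is the fact your write-up is missing, and without it neither of your two routes goes through.
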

\begin{proof}
The $(i,j)$ $2 \times 2$ block of $( Q\pp{N})^k$ is given by
\begin{align}
( Q\pp{N})^k_{ij} &= \sum_{1 \leq j_1, j_2, \ldots, j_{k-1}\leq 2^{n-1}} c_{j,j_1} c_{j_1,j_2}\cdots c_{i,j_{k-1}} R\left(\theta_0^{(j)} + \sum_{\ell=1}^{k-1}\theta_0^{(j_\ell)}  \right),
\end{align}
where the sum is over all points $(j_1,j_2,\ldots,j_{k-1}) \in \{1,2,\ldots,2^{n-1}\}^{k-1}$.  By the construction of $B\pp{N}(\Sigma_{\mathrm{R}})$ the $\theta_0^{(j)}$ variables are independent of 
$\theta_j^{(k,\ell)}$ for every choice of $j,k,\ell$.  And so we take the expectation over the $\theta_0^{(j)}$ variables first to find
\begin{align}
\mathbb E_{\theta_0^{(j)}} ( Q\pp{N})^k_{ij} = 0,
\end{align}
since
\begin{align}
\mathbb E_{\theta_0^{(j)}} R\left(\theta_0^{(j)} + \sum_{\ell=1}^{k-1}\theta_0^{(j_\ell)}  \right) = 0.
\end{align}
\end{proof}
Because these matrices do not give a significant performance difference over the Haar-butterfly matrices in Section~\ref{s:numerics}, we do not pursue their properties further.


\section{Numerical experiments}\label{s:numerics}

We now test classes of matrices by measuring their ability to truly randomize matrix $A \in \mathbb R^{N \times M}$, $N \gg M$.  For such a matrix $A$, assuming it is of full rank, define its $QR$ factorization $A = QR$, $Q \in \mathbb R^{N \times M}$, $R \in \mathbb R^{M\times M}$ where $R$ is upper triangular with positive diagonal entries and $Q$ has orthonormal columns.  The coherence of $A$ is defined by
\begin{align}
  \mathrm{coh}(A) : = \max_{1 \leq j \leq N} \| e_j^T Q\|_2^2, \quad A = QR,
\end{align}
where $(e_j)_{j=1}^M$ is the standard basis for $\mathbb R^M$.  The range of coherence is easily seen to be $[M/N,1]$ (see \cite{Ipsen2014}, for example).  One use of coherence is the following.  If $A$ has small coherence, $N \gg M$, then when solving $Ax = b$ randomly selecting a subset of the rows gives a good approximation of the least-squares solution.  If the coherence is high, such an approximation can fail to hold. As outlined in the introduction, to randomize a linear system take $Ax = b$, randomize it with a orthogonal (or unitary) matrix $\Omega$: $\Omega A x = \Omega b$.  Then the hope is that $\Omega A$ has smaller coherence than $A$ and by subsampling rows one can easily approximate the least-squares solution.  Indeed, in the \emph{Blendenpik} algorithm \cite{Avron2010}, this procedure is used to generate a preconditioner for the linear system, not just an approximate solution.  We call this \emph{randomized coherence reduction}.  We examine the ability of the following matrices to achieve this:
\begin{enumerate}
\item \underline{Haar-butterfly DCT (HBDCT)}: $\Omega = Q_{\mathrm{DCT}} Q\pp{N}$ where $Q_{\mathrm{DCT}}$ is the Discrete Cosine Transform (DCT) matrix and $Q\pp{N}$ is a Haar-butterfly matrix
\item \underline{RBM DCT (RBDCT)}: $\Omega = Q_{\mathrm{DCT}} Q\pp{N}$ where $Q_{\mathrm{DCT}}$ is the Discrete Cosine Transform (DCT) matrix and $Q\pp{N} \sim B\pp{N}(\Sigma_{\mathrm R})$ as given in \eqref{e:rbf}.
\item \underline{Random DCT (RDCT)}:  $\Omega = Q_{\mathrm{DCT}} D$ where $D$ is a diagonal matrix with $\pm 1$ on the diagonal (independent, $\mathbb P( D_{jj} = \pm 1) = 1/2$).  This was considered in \cite{Avron2010}.
\item \underline{Haar matrix}: $\Omega$ is a Haar matrix on $\mathrm O(N)$.
\end{enumerate}
We try to reduce the coherence of the following matrices $A = (a_{ij})_{1 \leq i \leq N, 1 \leq j \leq M}$.
\begin{enumerate}
\item {\tt randn}: $a_{11}$ and $a_{ij}$, $1 \leq i \leq N$, $2 \leq j \leq M$ are iid standard normal random variables and $a_{j1} = 0$ for $j > 1$.  This matrix was considered in \cite{Avron2010} and $\mathrm{coh}(A) = 1$ a.s..
\item {\tt hilbert}: $a_{ij} = 1/(i + j -1)$.  Simple numerical experiments  demonstrate that $\mathrm{coh}(A) \to 1$ as $N, M \to \infty$, rapidly.  This is a classical example of an ill-conditioned matrix.
\end{enumerate}


\subsection{{\tt hilbert} matrices}

In Table~\ref{t:hilbert-mean} we present the sample means of the coherence of the matrix $\Omega A$ where $A$ is the {\tt hilbert} matrix and $\Omega$ is one the four choices of random orthogonal matrices detailed above in this section.  In Table~\ref{t:hilbert-std} we give the standard deviations.  Because of the computational cost, we only give sample means and standard deviations for Haar matrix for $n \leq 12$. It is notable that the HBDCT outperforms the RDCT.  In Figures~\ref{f:g2} and \ref{f:g3} we plot the histograms for the coherence.

\begin{table}[h]
\begin{tabular}{|c|c|c|c|c|c|c|c|c|c|c|} 
  \hline
 $n$ &  $9$ & $10$ & $11$ & $12$ & $13$ & $14$ & $15$ & $16$ & $17$\\
  \hline
  HBDCT& $0.900$ & $0.826$ & $0.752$ & $0.695$ & $0.686$ & $0.725$ & $0.793$ & $0.860$ & $0.912$\\
  RBDCT& $0.887$ & $0.810$ & $0.733$ & $0.675$ & $0.668$ & $0.713$ & $0.787$ & $0.857$ & $0.911$\\
RDCT& $0.916$ & $0.857$ & $0.786$ & $0.737$ & $0.746$ & $0.812$ & $0.877$ & $0.927$ & $0.957$\\
Haar &  $0.281$ & $0.147$ & $0.076$ & $0.039$ & -- & -- & -- & -- & --\\
  \hline
\end{tabular}
\caption{ \label{t:hilbert-mean} Sample means for the coherence of $\Omega A$ where $A \in \mathbb R^{2^n \times M}$ is the {\tt hilbert} matrix when $M = 100$.  Random butterfly matrices give a small improvement when compared to the Random DCT. But the Haar matrix gives a clear improvement over the others.  There is minimal difference between matrices from $B_s\pp{N}(\Sigma_{\mathrm R})$ and $B\pp{N}(\Sigma_{\mathrm R})  $.}
\end{table}

\begin{table}[h]
\begin{tabular}{|c|c|c|c|c|c|c|c|c|c|c|} 
  \hline
 $n$ &  $9$ & $10$ & $11$ & $12$ & $13$ & $14$ & $15$ & $16$ & $17$\\
  \hline
HBDCT& $0.037$ & $0.057$ & $0.071$ & $0.079$ & $0.081$ & $0.084$ & $0.08$ & $0.067$ & $0.049$\\
RBDCT&$0.042$ & $0.061$ & $0.074$ & $0.079$ & $0.081$ & $0.083$ & $0.079$ & $0.065$ & $0.048$\\
RDCT&$0.034$ & $0.05$ & $0.062$ & $0.069$ & $0.071$ & $0.061$ & $0.045$ & $0.03$ & $0.019$\\
Haar&$0.033$ & $0.021$ & $0.015$ & $0.012$ & -- & -- & -- & -- & --\\
  \hline
\end{tabular}
\caption{\label{t:hilbert-std} Sample standard deviations for the coherence of $\Omega A$ where $A \in \mathbb R^{2^n \times M}$ is the {\tt hilbert} matrix  when $M = 100$.  Random butterfly matrices have slightly larger standard deviations when compared to the Random DCT. But the Haar matrix gives a clear improvement over the others.  There is minimal difference between matrices from $B_s\pp{N}(\Sigma_{\mathrm R})$ and $B\pp{N}(\Sigma_{\mathrm R})  $.}
\end{table}

\begin{figure}[h]
  \centering
  \begin{overpic}[width=.7\linewidth]{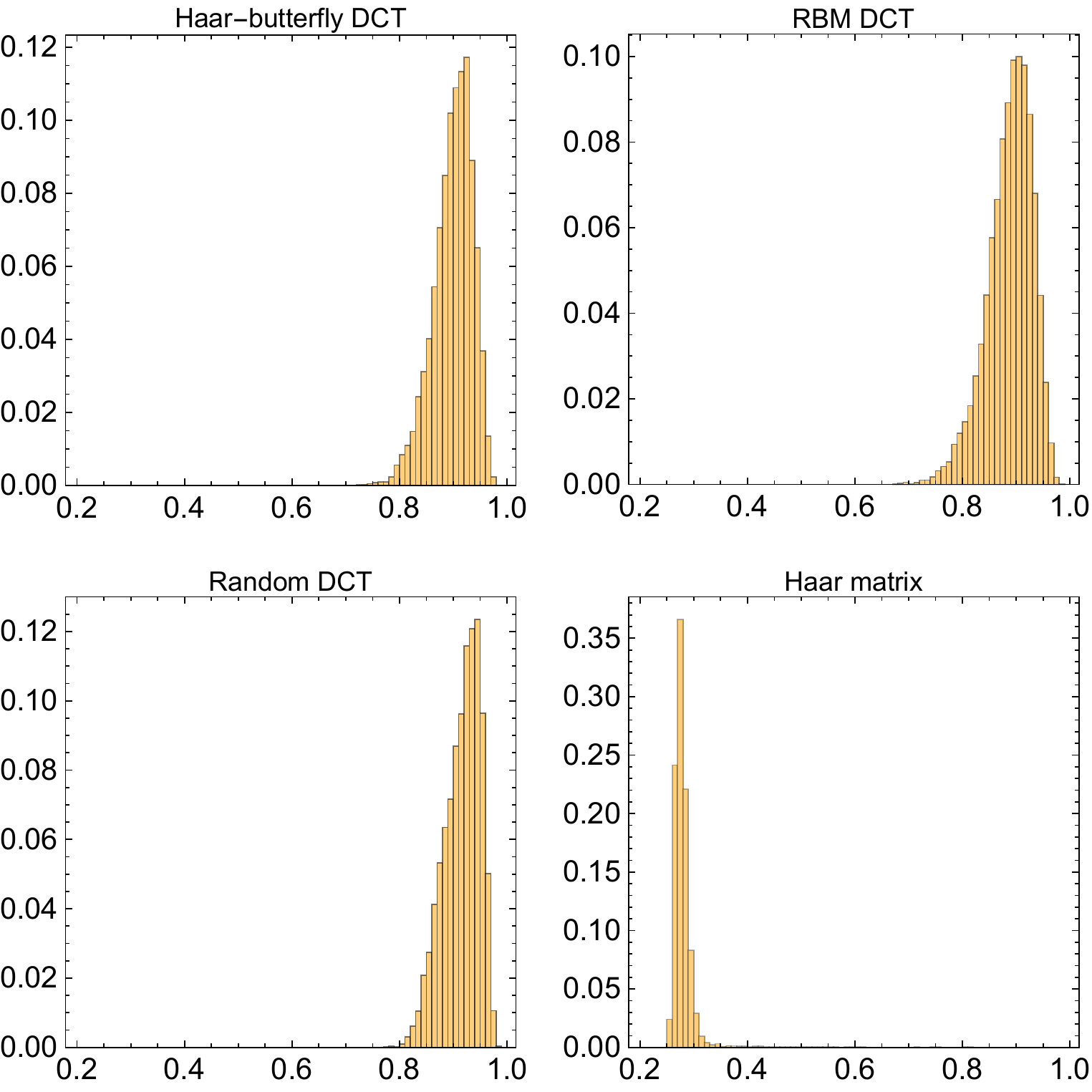}
    \put(42,-5){Coherence}
    \put(-5,37){\rotatebox{90}{Relative frequency}}
  \end{overpic}
  \vspace{.1in}
  \caption{\label{f:g2} Histograms for the coherence of $\Omega A$ when $A \in \mathbb R^{2^n\times M}$ when $A$ is the {\tt hilbert} matrix and $n = 9$, $M = 100$ with 10,000 samples. The Haar matrices out-perform the other random orthogonal matrices but are computationally prohibitive to use in practice.}
\end{figure}

\begin{figure}[h]
  \centering
  \begin{overpic}[width=.95\linewidth]{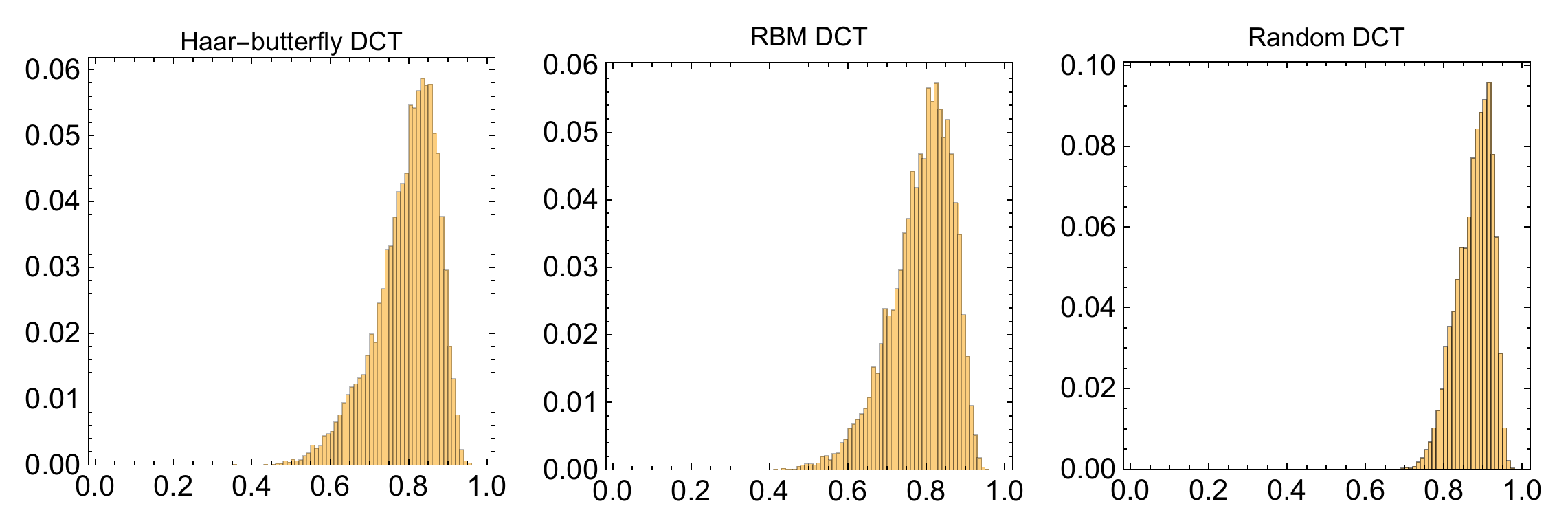}
    \put(42,-3){Coherence}
    \put(-2,5){\rotatebox{90}{Relative frequency}}
  \end{overpic}
  \vspace{.1in}
  \caption{\label{f:g3} Histograms for the coherence of $\Omega A$ when $A \in \mathbb R^{2^n\times M}$ when $A$ is the {\tt hilbert} matrix and $n = 15$, $M = 100$ with 10,000 samples. The random butterfly matrices out-perform the random DCT.  }
\end{figure}

\subsection{{\tt randn} matrices}

In Table~\ref{t:randn-mean} we present the sample means of the coherence of the matrix $\Omega A$ where $A$ is the {\tt randn} matrix and $\Omega$ is one the four choices of random orthogonal matrices.  Similarly, in Table~\ref{t:randn-std} we give the standard deviations.  As before, because of the computational cost, we only give sample means and standard deviations for Haar matrix for $n \leq 12$. This is similar to the experiment performed in \cite{Avron2010}, where the random DCT performs very well. In Figure~\ref{f:g1} we plot the histograms for the coherence.  We see that butterfly matrices do not perform better than the random DCT.  Future work will be in the direction of understanding this at a deeper level.  

\begin{table}[h]
\begin{tabular}{|c|c|c|c|c|c|c|c|c|c|c|} 
  \hline
 $n$ &  $9$ & $10$ & $11$ & $12$ & $13$ & $14$ & $15$ & $16$ & $17$\\
  \hline
HBDCT &$0.285$ & $0.156$ & $0.088$ & $0.051$ & $0.031$ & $0.019$ & $0.012$ & $0.008$ & $0.005$\\
RBDCT &$0.285$ & $0.157$ & $0.088$ & $0.051$ & $0.031$ & $0.019$ & $0.012$ & $0.008$ & $0.005$\\
RDCT&$0.277$ & $0.145$ & $0.075$ & $0.039$ & $0.02$ & $0.01$ & $0.005$ & $0.003$ & $0.001$\\
  Haar&$0.277$ & $0.145$ & $0.075$ & $0.039$ & -- & -- & -- & -- & -- \\
  \hline
\end{tabular}
\caption{\label{t:randn-mean} Sample means for the coherence of $\Omega A$ where $A \in \mathbb R^{2^n \times M}$ is the {\tt randn} matrix  when $M = 100$.  Random butterfly matrices do not give an improvement when compared to the Random DCT. Surprisingly, Haar matrices and the random DCT give similar performance.}
\end{table}

\begin{table}[h]
\begin{tabular}{|c|c|c|c|c|c|c|c|c|c|c|} 
  \hline
 $n$ &  $9$ & $10$ & $11$ & $12$ & $13$ & $14$ & $15$ & $16$ & $17$\\
  \hline
HBDCT &$0.022$ & $0.022$ & $0.02$ & $0.016$ & $0.012$ & $0.009$ & $0.007$ & $0.005$ & $0.004$\\
RBDCT &$0.023$ & $0.022$ & $0.02$ & $0.017$ & $0.012$ & $0.01$ & $0.007$ & $0.005$ & $0.004$\\
RDCT &$0.011$ & $0.006$ & $0.003$ & $0.001$ & $0.001$ & $0.0$ & $0.0$ & $0.0$ & $0.0$\\
Haar &$0.011$ & $0.006$ & $0.003$ & $0.001$ & -- & -- & -- & -- & --\\
  \hline
\end{tabular}
\caption{\label{t:randn-std} Sample standard deviations for the coherence of $\Omega A$ where $A$ is the {\tt randn} matrix  when $M = 100$.  Again, random butterfly matrices do not give an improvement when compared to the Random DCT. Surprisingly, Haar matrices and the random DCT give similar performance --- a very small standard deviations.}
\end{table}

\begin{figure}[h!]
  \centering
  \begin{overpic}[width=.7\linewidth]{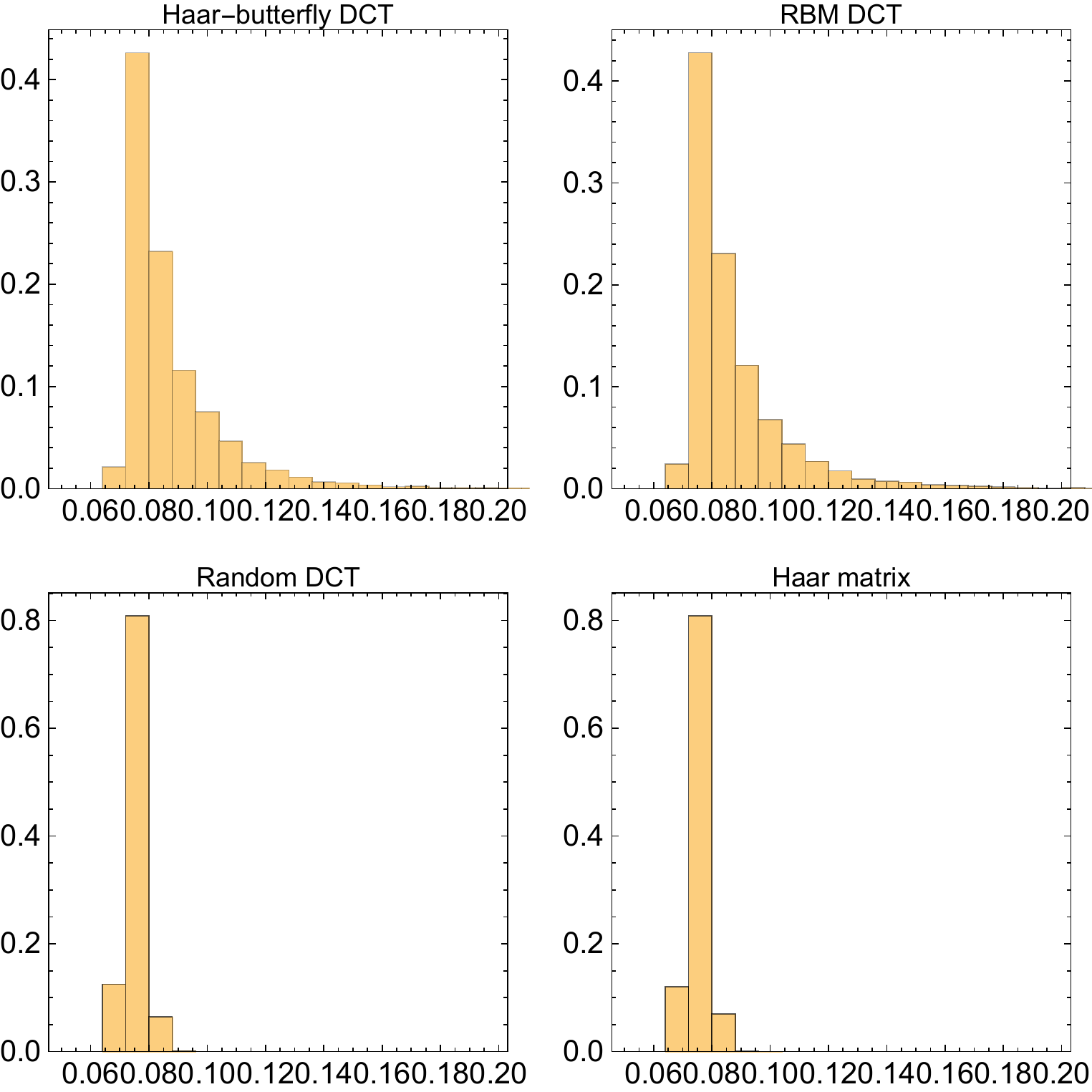}
    \put(42,-5){Coherence}
    \put(-5,37){\rotatebox{90}{Relative frequency}}
  \end{overpic}
  \vspace{.1in}
  \caption{\label{f:g1} Histograms for the coherence of $\Omega A$ when $A \in \mathbb R^{2^n\times M}$ when $A$ is the {\tt randn} matrix and $n = 11$, $M = 100$ with 10,000 samples. The Haar matrices and random DCT matrices perform similarly.}
\end{figure}


\section{Computational methods}

In this section we give {\tt Julia} code to efficiently multiply the discussed random matrices.  We also discuss the complexity of these algorithms.

\subsection{Computing multiplication by a Haar matrix on $\mathrm{O}(N)$}\label{s:haar}

It is well-known that a Haar matrix on $\mathrm{O}(N)$ can be sampled by computing the QR factorization of an $N \times N$ matrix of iid standard normal random variables requiring $O(N^3)$ operations.  But if one wants to multiply a sampled Haar matrix and a vector, the matrix does not need to be constructed and the complexity can be reduced to $O(N^2)$ operations \cite{Stewart1980}.  We discuss the approach briefly and give {\tt Julia} code.  Given a non-trivial vector $u \in \mathbb R^L$, $1 < L \leq N$ define the Householder reflection matrix\footnote{Classically, one usually uses $u \pm \|u\|_2 e_1$ to maximize numerical stability but in our random setting this will not (with high probability) be a numerical issue.}
\begin{align}
  H(u) := \begin{bmatrix} I_{(N-L)\times (N-L)} & 0 \\ 0 & I_{L\times L} - 2 vv^T \end{bmatrix}, \quad v = \frac{u - \|u\|_2 e_1}{\|u - \|u\|_2 e_1\|_2}.
\end{align}
If $M =1$ then $H(u) : = \mathrm{diag}(1,1,\ldots,1,\mathrm{sign}(u))$.  Let $u_1,u_2,\ldots,u_{N}$ be iid random vectors with standard normal entries.  Further, suppose that $u_j \in \mathbb R^{N-j+1}$.  Then the matrix
\begin{align}
  H(u_N) H(u_{N-1}) \cdots H(u_1)
\end{align}
is a Haar matrix. From a computational point of view, $H(u_j)$ requires $O(N-j)$ operations to apply to a vector, so that the product requires only $O(N^2)$ operations to apply to a vector, down from the $O(N^3)$, that is required for the QR factorization approach.  To sample an entire Haar matrix one can simply call {\tt haarO(eye(N))}.

\begin{lstlisting}[linewidth = \linewidth,frame = trBL]%
  
function house!(v::Vector,A::Array)
  n = length(v)
  u = v/(norm(v)/sqrt(2))
  A[end-n+1:end,:] = A[end-n+1:end,:]-
  u*(u'*A[end-n+1:end,:])
end

function haarO(A::Array)
  m = size(A)[1]
  B = A
  for i = 1:m-1
    v = randn(m+1-i)
    v[1] = v[1] - norm(v)
    house!(v,B)
  end
  B[end,:] = sign(randn())*B[end,:]  
  B
end
\end{lstlisting}

\subsection{Computing multiplication by a Haar-butterfly  matrix}\label{s:code_haarrbm}

The function {\tt haar\_rbm(v)} below applies a Haar-butterfly matrix to the vector {\tt v}.  This is done recursively in the following implementation.

\begin{lstlisting}[linewidth = \linewidth,frame = trBL]%[float,caption=A floating example]
  
f = n -> 2*pi*rand(n)

function haar_rbm(t::Vector,v::Array)
  if length(t) == 0
    return v
  end
  n = size(v)[1]
  m = Int(n/2)
  v1 =  haar_rbm(t[1:end-1],v[1:m,:])
  v2 =  haar_rbm(t[1:end-1],v[m+1:end,:])
  c = cos(t[end])
  s = sin(t[end])
  return vcat(c*v1+s*v2,-s*v1+c*v2)
end

function haar_rbm(v::Array)
  n = Int(log(2,size(v)[1]))
  return haar_rbm(f(n),v)
end

\end{lstlisting}

If one wants to invert the transformation, the angles $\theta_j$ can be saved as the following code illustrates.

\begin{lstlisting}[linewidth = \linewidth,frame = trBL]
julia> n = 5; v = ones(Float64,2^n); t = f(n); norm(v)
5.656854249492381

julia> hatv = haar_rbm(t,v); norm(hatv)
5.6568542494923815 

julia> norm(v-haar_rbm(-t,hatv))
2.462593796972316e-15 
\end{lstlisting}

If the entire matrix is to be constructed, one can just call {\tt haar\_rbm(eye(2\string^n))}. To compute the number of operations (we concentrate on multiplications). Let $O_n$ denote the number of multiplications it requires to apply this algorithm to a $2^n$-dimensional vector.  Then $O_n$ satisfies
\begin{align}
  O_n = 2 O_{n-1} + 2^{n}, \quad O_0 = 0.
\end{align}
It is straightforward to verify that
\begin{align}\label{e:iter-haar}
  O_n = n 2^{n+1} = 2N \log_2 N.
\end{align}

\newcommand{\mtt}[1]{\text{\tt#1}}

The following code implements subsampled Haar-butterfly matrix multiplication.  Given $k$, divide a vector $w \in \mathbb R^{2^n}$ into $2^j$ vectors $w_\ell$, $\ell = 1,2,\ldots,2^k$ of size $M = 2^{n-k}$.  For a Haar-butterfly matrix $Q\pp{N}$, $N = 2^n$, $w = Q\pp{N}\mtt v$, the function {\tt harr\_rbm(t,v,k,j)} returns $w_\ell$ such that $\mtt j \in \left[(\ell-1) 2^{n-k} + 1, \ell 2^{n-k}\right]$.  In other words, it returns the vector $w_\ell$ that contains the entry $\mtt j$ of $w$.

\begin{lstlisting}[linewidth = \linewidth,frame = trBL]
function haar_rbm(t::Vector,v::Array,j::Int,k::Int)
  if k == 0
    return haar_rbm(t,v)
  end
  if length(t) == 0
    return v
  end
  n = size(v)[1]
  m = Int(n/2)
  c = cos(t[end])
  s = sin(t[end])
  if j > m
    v1 =  haar_rbm(t[1:end-1],v[1:m,:],j-m,k-1)
    v2 =  haar_rbm(t[1:end-1],v[m+1:end,:],j-m,k-1)
    return -s*v1+c*v2
  else
    v1 =  haar_rbm(t[1:end-1],v[1:m,:],j,k-1)
    v2 =  haar_rbm(t[1:end-1],v[m+1:end,:],j,k-1)
    return c*v1+s*v2
  end
end
\end{lstlisting}
The code is demonstrated here.  A small speedup is realized.

\begin{lstlisting}[linewidth = \linewidth,frame = trBL]
julia> n = 15; N = 2^n; t = f(n); v = ones(Float64,N);
 j = 1; k = 13;

julia> @btime c1 = haar_rbm(t,v,j,k); #Subsampled
  22.499 ms (581635 allocations: 39.27 MiB)

julia> m = 2^(n-k);
@btime c2 = haar_rbm(t,v)[1+(j-1)*m:j*m]; #Original
  29.692 ms (655603 allocations: 55.44 MiB)

julia> norm(c1-c2)
0.0
\end{lstlisting}

Given $n$ and $0 \leq k \leq n$ we compute $O_{n,k}$ which is the number of multiplications required to apply a subsampled Haar-butterfly matrix to a vector with $2^n$ entries.  If $k = 0$ we see that $O_{n,0} = O_n$ (the un-subsampled version).  With $n$ fixed, we find the following recursion
\begin{align}
  O_{n,k} = 2 O_{n-1,k+1} + 2^{n-k+1}, \quad n = k, k+1, \ldots.
\end{align}
From this it follows
\begin{align}\label{e:iter}
  O_{n,k} = 2^n \left( 2(n-k) + \sum_{\ell = 0}^{k-1} 2^{-\ell} \right) \leq 2^{n+1}(n - k + 1) = 2 N (\log_2 M +1).
\end{align}

\subsection{Computing multiplication by a non-simple random butterfly matrix}\label{s:code_rbm}

For the non-simple random butterfly matrices, in this implementation, we do not keep track of the sampled random variables which would allow inversion.

\begin{lstlisting}[linewidth = \linewidth,frame = trBL]
function rbm(v::Array)
  n = size(v)[1]
  if n == 1
    return v
  end
  m = Int(n/2)
  t = 2*pi*rand()
  v1 =  rbm(v[1:m,:])
  v2 =  rbm(v[m+1:end,:])
  c = cos(t)
  s = sin(t)
  return vcat(c*v1+s*v2,-s*v1+c*v2)
end
\end{lstlisting}
The code is demonstrated here.

\begin{lstlisting}[linewidth = \linewidth,frame = trBL]
julia> n = 5; v = ones(Float64,2^n); norm(v)
5.656854249492381

julia> norm(rbm(v))
5.65685424949238

julia> norm(rbm(v)-v)
7.184366533253482
\end{lstlisting}

\section{Conclusion}

When one is tasked with randomizing a linear system to force it into its ``most generic'' state, the theoretical choice is to multiply the matrix by a matrix distributed according to Haar measure on the unitary (or orthogonal) group.  But this is computationally prohibitive.  The butterfly matrices give an alternative, requiring only $O(N \log N)$ operations for a matrix-vector product.  The theory we have developed suggests that the Haar-butterfly matrices and the non-simple butterfly matrices mimic Haar matrices better in some respects than matrices with iid eigenvalues.  We summarize the theoretical distinction in the Table~\ref{table}.

The numerical results in Section~\ref{s:numerics} demonstrate that random butterfly matrices mimic Haar matrices in other respects.  The Haar-butterfly matrices work almost as well as the randomized DCT (RDCT) when trying to minimize the coherence for the {\tt randn} matrix and out-perform the RDCT for the {\tt hilbert} matrix.  Both the RDCT and the Haar-butterfly matrices are out-performed by Haar measure on $O(N)$, which we again must note, is computationally prohibitive in most settings.

We have presented theory and numerics to show how the random butterfly matrices sit in relation to other classes of random orthogonal/unitary matrices. And while there is no obvious link between the established theoretical properties of Haar-butterfly matrices and their performance in practice, the numerical results are quite suggestive.  Further work will be in the direction of making this link more concrete.

 \begin{table} \centerline{\begin{tabular}{c|c|c}
                Distribution on $U\pp{N}$ & $ \mathrm{Var}\left( \tr (U^{(N)})^k \right) $ & Convergence of $\frac{\tr (U^{(N)})^k - \mathbb E\left[\tr \left( U^{(N)} \right)^k \right]}{\sqrt{\mathrm{Var}\left( \tr (U^{(N)})^k \right)}}$\\
                && \\
                \hline
                && \\
                Haar measure on $U(N)$ & $k$  & \begin{tabular}{c} in distribution\\
                                                                            to complex standard normal\end{tabular}  \\
                && \\
                \hline
                && \\
                Haar-butterfly distribution &  $N$  & almost surely to zero\\
                && \\
                \hline
                && \\
                iid eigenvalues\footnote{By \emph{iid eigenvalues} we mean that the eigenvalues are given by $\lambda_j = \E^{\I \theta_j}$ where $\theta_j$ are iid uniform on $[0, 2\pi)$.} & $N$ & \begin{tabular}{c} in distribution\\
                                                                            to complex standard normal \end{tabular} 
                           \end{tabular}}
                         \vspace{.1in}
\caption{A summary of the statistical behavior of Haar-butterfly matrices compared to other well-known distributions on orthogonal/unitary matrices.  Haar-butterfly matrices share the same variance as those with iid while only requiring $\log_2 N$ random variables in their construction. \label{table} }
                         \end{table}

\bibliography{library}

\providecommand{\bysame}{\leavevmode\hbox to3em{\hrulefill}\thinspace}
\providecommand{\MR}{\relax\ifhmode\unskip\space\fi MR }
\providecommand{\MRhref}[2]{%
  \href{http://www.ams.org/mathscinet-getitem?mr=#1}{#2}
}
\providecommand{\href}[2]{#2}
\begin{thebibliography}{10}

\bibitem{Avron2010}
H~Avron, P~Maymounkov, and S~Toledo, \emph{{Blendenpik: Supercharging LAPACK's
  Least-Squares Solver}}, SIAM Journal on Scientific Computing \textbf{32}
  (2010), no.~3, 1217--1236.

\bibitem{Boutsidis2013}
C~Boutsidis and A~Gittens, \emph{{Improved Matrix Algorithms via the Subsampled
  Randomized Hadamard Transform}}, SIAM Journal on Matrix Analysis and
  Applications \textbf{34} (2013), no.~3, 1301--1340.

\bibitem{Diaconis2001}
P~Diaconis and S~N Evans, \emph{{Linear functionals of eigenvalues of random
  matrices}}, Transactions of the American Mathematical Society \textbf{353}
  (2001), no.~07, 2615--2634.

\bibitem{Diaconis1994}
P~Diaconis and M~Shahshahani, \emph{{On the Eigenvalues of Random Matrices}},
  Journal of Applied Probability \textbf{31} (1994), no.~1994, 49.

\bibitem{Forrester2010}
P~Forrester, \emph{{Log-gases and random matrices}}, Princeton University
  Press, 2010.

\bibitem{Halko2011}
N~Halko, P~G Martinsson, and J~A Tropp, \emph{{Finding Structure with
  Randomness: Probabilistic Algorithms for Constructing Approximate Matrix
  Decompositions}}, SIAM Review \textbf{53} (2011), no.~2, 217--288.

\bibitem{Ipsen2014}
I~C~F Ipsen and T~Wentworth, \emph{{The Effect of Coherence on Sampling from
  Matrices with Orthonormal Columns, and Preconditioned Least Squares
  Problems}}, SIAM Journal on Matrix Analysis and Applications \textbf{35}
  (2014), no.~4, 1490--1520.

\bibitem{Kac1959}
M~Kac, \emph{{Statistical Independence in Probability, Analysis and Number
  Theory}}, Mathematical Association of America, Washington D.C., 1959.

\bibitem{Lerman2014}
G~Lerman and T~Maunu, \emph{{Fast algorithm for robust subspace recovery}},
  (2014).

\bibitem{Liberty2007}
E~Liberty, F~Woolfe, P-G Martinsson, V~Rokhlin, and M~Tygert, \emph{{Randomized
  algorithms for the low-rank approximation of matrices.}}, Proceedings of the
  National Academy of Sciences of the United States of America \textbf{104}
  (2007), no.~51, 20167--72.

\bibitem{Mezzadri2006}
F~Mezzadri, \emph{{How to generate random matrices from the classical compact
  groups}}, Notices of the AMS \textbf{54} (2006), 592--604.

\bibitem{Nachbin1976}
L~Nachbin, \emph{{The Haar integral}}, R.E. Krieger Pub. Co, 1976.

\bibitem{Parker1995}
D~S Parker, \emph{{Random Butterfly Transformations with Applications in
  Computational Linear Algebra}}, Tech. report, UCLA, 1995.

\bibitem{Stewart1980}
G~W Stewart, \emph{{The efficient generation of random orthogonal matrices with
  an application to condition estimators}}, SIAM Journal on Numerical Analysis
  (1980).

\bibitem{Tropp2011}
J~A Tropp, \emph{{Improved analysis of the subsampled randomized Hadamard
  transform}}, Advances in Adaptive Data Analysis \textbf{03} (2011),
  no.~01n02, 115--126.

\bibitem{VanLoan1992}
Charles {Van Loan}, \emph{{Computational Frameworks for the Fast Fourier
  Transform}}, Society for Industrial and Applied Mathematics, jan 1992.

\bibitem{Weil1951}
A~Weil, \emph{{L'int{\'{e}}gration dans les groupes topologiques et ses
  applications}}, Paris, Hermann, Paris, 1951.

\end{thebibliography}
\end{document}